\newtheorem{lemma}{Lemma}
\newtheorem{theorem}[lemma]{Theorem}
\newtheorem{corollary}[lemma]{Corollary}
\newtheorem{exampl}[lemma]{Example}
\newenvironment{example}{\begin{exampl}\upshape}{\hfill\block\end{exampl}}
\newcommand{\C}{{\bf C}}
\newcommand{\E}{{\bf E}}
\newcommand{\F}{{\bf F}}
\newcommand{\G}{{\bf G}}
\newcommand{\Q}{{\bf Q}}
\newcommand{\R}{{\bf R}}
\newcommand{\Z}{{\bf Z}}
\newcommand{\rme}{{\rm e}}
\newcommand{\cA}{{\cal A}}
\newcommand{\cB}{{\cal B}}
\newcommand{\cC}{{\cal C}}
\newcommand{\cF}{{\cal F}}
\newcommand{\cG}{{\cal G}}
\newcommand{\cH}{{\cal H}}
\newcommand{\cJ}{{\cal J}}
\newcommand{\cL}{{\cal L}}
\newcommand{\cP}{{\cal P}}
\newcommand{\cQ}{{\cal Q}}
\newcommand{\cR}{{\cal R}}
\newcommand{\alp}{\alpha}
\newcommand{\lam}{\lambda}
\newcommand{\del}{\delta}
\newcommand{\tr}{{\rm tr}}
\newcommand{\spec}{{\rm spec}}
\newcommand{\norm}{\Vert}
\newcommand{\supp}{{\rm supp}}
\newcommand{\hash}{\#}
\newcommand{\Schrodinger}{Schr\"odinger }
\newcommand{\implies}{\Rightarrow}
\newcommand{\pr}{\prime}
\newcommand{\til}{\widetilde}
\newcommand{\wdh}{\widehat}
\newcommand{\eqref}[1]{(\ref{#1})}
\newcommand{\block}{\hfill\rule{2.5mm}{2.5mm}}
\newenvironment{proof}{\textbf{Proof}}{\block}
\newenvironment{choices}{ \left\{ \begin{array}{ll} }{\end{array}\right.}
\newcommand{\mtrx}[1]{\left(\begin{array}{cc}#1 \end{array}\right)}
\newcommand{\txtmtrx}[1]{\raisebox{0.25ex}{\scalebox{0.6}{$\mtrx{#1}$}}}
\newcommand{\vctr}[1]{\left(\begin{array}{c}#1 \end{array}\right)}
\newcommand{\txtif}{\mbox{ if }}
\newcommand{\txtotherwise}{\mbox{ otherwise}}
\title{Algebraic Aspects of Spectral Theory}
\author{E B Davies}
\date{8 June 2010}
\begin{document}
\maketitle
%%%%%%%%%%%%%%%%%%%%%
\begin{abstract}
We describe some aspects of spectral theory that involve algebraic
considerations but need no analysis. Some of the important
applications of the results are to the algebra of $n\times n$
matrices with entries that are polynomials or more general analytic
functions.
\end{abstract}

Short title: Algebraic Spectral Theory\\
MSC subject classification: 47A56, 47C05, 15A22, 16Sxx, 16Bxx

\section{Introduction}\label{intro}

This paper describes a range of results in spectral theory that can
be formulated and proved in an algebraic context without assuming
that the algebra has a norm or even that the underlying scalar field
is the field of complex numbers. In some of the applications the
`scalars' involved are polynomials or rational functions. The author
undertook this research in order to obtain some insights into a
range of contexts in which such ideas are used, including some
related to polynomial pencils, so-called non-linear spectral theory
and control theory. Such problems are often not easily formulated in
traditional Banach algebra terms. As well as its potential applications, indicated below, this study helps to delineate those aspects of spectral theory that do not depend on analytic tools -- norms, continuity, complex analysis, etc.

A very limited account of spectral theory at our degree of
generality may be found in \cite[pp. 1-12, 87]{bourbaki}. A fuller
treatment is given in \cite[Chap.~1]{dales}, but it still has little
overlap with the contents of this paper. More relevant is
\cite[Appendix~4]{whitney}, which gives an extensive account of
various rings and fields of polynomials, analytic and meromorphic
functions from the algebraic perspective. Some elements of this
theory are described in the final section of the present paper.
Simple examples of algebras to which the theorems in this paper
might be applied are

\begin{enumerate}

\item
Algebras of bounded or unbounded operators on a Hilbert space and
infinite tensor product algebras.

\item
An important application is to the algebra $M(n,\cB)$ of all
$n\times n$ matrices with entries in a commutative algebra $\cB$,
particularly the case in which $\cB$ is the ring of polynomials (or
rational functions) in a single complex variable. This example
features in the theory of loop groups, \cite[Sect.~3.5]{PS} and in
the study of evolution equations that involve matrices with
coefficients that depend polynomially or rationally on time.

\item
The space $\cH_{\infty}^n$ of all bounded, analytic, $n\times n$ matrix-valued functions on the upper half plane or the unit disc plays a key role in optimal control theory; see \cite{LR,ZDG}. References to the early literature on bounded operator-valued functions on the unit disc may be found in \cite[Chap.~5]{S-NF}. In both cases one may regard the coefficient ring as the integral domain consisting of all bounded, complex-valued analytic functions on a domain in $\C$; the quotient field is a certain class of meromorphic functions on the same domain.

\item
Voiculescu has developed an algebraic approach to non-commutative
probability theory that has substantial applications to random
matrix theory; see \cite{voic} and \cite{tao}, where the term `free
probability theory' is used. In this application one considers
$n\times n$ matrices whose entries lie in a certain algebra of
unbounded random variables by using highly algebraic methods.
\end{enumerate}

We emphasize that much of the material in this paper is classical if
$\cA$ is the algebra of all $n\times n$ complex matrices; see
\cite{GLR}. We are mainly concerned with the extent to which the
results hold for more general choices of the algebra and base field.
The final section describes some examples of relevant fields. We
give a number of basic algebraic definitions in the paper for the
benefit of those who are not familiar with the subject.

\section{Some elementary results}

Let $\cA$ be an algebra over a field $\F$; we always assume that
$\cA$ contains an identity element, which we denote by $e$. We say
that $\lam\in\F$ does not lie in the spectrum of $a\in\cA$ if there
exists $b\in\cA$ such that $(a-\lam e)b=b(a-\lam e)=e$. The spectrum
of $a$ might be empty or equal to $\F$.

The formula $L_a(x)=ax$ defines a one-one algebra homomorphism from
$\cA$ to the algebra $\cL(\cA)$ of linear operators on $\cA$. The
similar formula $R_a(x)=xa$ defines an anti-homomorphism, in the
sense that $R_aR_b=R_{ba}$ for all $a,\, b\in\cA$. The following
lemma has some similarity to a corresponding result for C*-algebras,
but it is applicable much more generally, for example to the algebra
of $n\times n$ upper triangular matrices with entries in any field.

\begin{lemma}

We have $L_aR_b=R_bL_a$ for every $a,\, b\in \cA$. If $M\in
\cL(\cA)$ and $MR_b=R_bM$ for all $b\in\cA$ then there exists
$a\in\cA$ such that $M=L_a$.
\end{lemma}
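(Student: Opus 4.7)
The plan is to handle the two assertions separately, both flowing from the associativity of the product in $\cA$.

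For the first claim, the statement $L_aR_b = R_bL_a$ is essentially a rewriting of associativity. I would simply compute, for an arbitrary $x\in\cA$, that $(L_aR_b)(x) = L_a(xb) = a(xb) = (ax)b = R_b(ax) = (R_bL_a)(x)$, using associativity in the middle step. This is a one-line verification with no real obstacle.

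For the second claim, the key observation is that $M$ is determined by its value on the identity. Given $M\in\cL(\cA)$ that commutes with every $R_b$, I would define $a := M(e)\in\cA$ and then show that $M = L_a$. For any $x\in\cA$, note that $x = R_x(e) = ex$, so applying $M$ and using $MR_x = R_xM$ yields
\[
M(x) = M(R_x(e)) = R_x(M(e)) = R_x(a) = ax = L_a(x).
\]
Since $x$ was arbitrary, $M = L_a$.

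The argument is straightforward; the only subtlety worth flagging is the direction of the identity $x = R_x(e)$, which forces us to recover $M(x)$ via right multiplication by $x$ rather than left. This is exactly why the hypothesis is that $M$ commutes with every $R_b$ (and not every $L_b$): the relevant variable has to be pulled outside $M$ as a right multiplier applied to $e$. There is no genuine obstacle beyond correctly using the existence of the identity element $e\in\cA$, which the paper has already assumed.
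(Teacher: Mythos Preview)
Your proof is correct and matches the paper's approach exactly: the paper dismisses the first claim as elementary and, for the second, sets $a=M(e)$ and computes $M(b)=M(R_be)=R_bM(e)=M(e)b=ab$, which is precisely your argument with $x$ in place of $b$.
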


\begin{proof}
The first statement is elementary. To prove the second we put
$a=M(e)$ and observe that
\[
M(b)=M(R_be)=R_bM(e)=M(e)b=ab
\]
for all $b\in\cA$, so $M=L_a$.
\end{proof}

\begin{lemma}\label{lemmainv}
If $L_a$ is invertible as an element of $\cL(\cA)$ then $a$ is an
invertible element of $\cA$ and $(L_a)^{-1}=L_{a^{-1}}$. Hence
\[
\spec(L_a)=\spec(a)
\]
for all $a\in\cA$.
\end{lemma}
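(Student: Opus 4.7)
The plan is to leverage the preceding lemma: any $M \in \cL(\cA)$ that commutes with every $R_b$ must be of the form $L_a$. So the key is to show that the inverse of $L_a$ (when it exists) automatically commutes with every right multiplication operator, and then read off the inverse element.

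First I would suppose $L_a$ is invertible with inverse $M \in \cL(\cA)$, so $ML_a = L_a M = I$. By the first statement of Lemma 1, $L_a R_b = R_b L_a$ for every $b\in\cA$. Multiplying this identity on the left and on the right by $M$ and using $ML_a = L_a M = I$ gives $R_b M = M R_b$ for every $b \in \cA$. By the second statement of Lemma 1 there exists $c \in \cA$ with $M = L_c$. Now $L_a L_c = L_{ac} = I$, and applying both sides to $e$ yields $ac = e$; similarly $L_c L_a = L_{ca} = I$ gives $ca = e$. Hence $a$ is invertible in $\cA$ with $a^{-1}=c$, and $(L_a)^{-1}=L_{a^{-1}}$. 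The converse direction is immediate: if $a$ is invertible then $L_a L_{a^{-1}} = L_{a^{-1}} L_a = L_e = I$, so $L_a$ is invertible in $\cL(\cA)$.

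For the spectral equality, observe that $L_a - \lam I = L_a - \lam L_e = L_{a - \lam e}$, since $a \mapsto L_a$ is a linear map. Applying the equivalence just established to $a-\lam e$ in place of $a$ gives: $L_a - \lam I$ is invertible in $\cL(\cA)$ if and only if $a-\lam e$ is invertible in $\cA$. Therefore $\lam \in \spec(L_a)$ if and only if $\lam \in \spec(a)$.

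The only step that requires any thought is checking that the abstract inverse $M$ commutes with all $R_b$; this is the place where Lemma 1 does the real work, and everything else is a routine unwinding of definitions. There is no difficulty with the base field or with any lack of analytic structure, since the argument is purely algebraic.
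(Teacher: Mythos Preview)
Your argument is correct. It differs from the paper's proof in a genuine way: you invoke Lemma~1 to identify the inverse $M$ as a left multiplication $L_c$ by first showing $M$ commutes with every $R_b$, whereas the paper proceeds by direct evaluation without appealing to Lemma~1 at all. The paper simply sets $b=M(e)$, evaluates $L_aM(e)=e$ to get $ab=e$, then for arbitrary $s$ writes $M(s)=M(abs)=ML_a(bs)=bs$ to conclude $M=L_b$, and finally evaluates $ML_a(e)=e$ to get $ba=e$. Your route is more structural: it exhibits $\{L_a:a\in\cA\}$ as the commutant of $\{R_b:b\in\cA\}$ and uses the general fact that commutants are closed under inversion. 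The paper's route is a shade more elementary and self-contained, needing nothing beyond the definition of $L_a$. Both approaches handle the spectral identity in the same way, by applying the invertibility equivalence to $a-\lam e$.
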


\begin{proof}
Let $M\in\cL(\cA)$ and
\begin{equation}
ML_a(x)=x=L_aM(x)\label{Lainv}
\end{equation}
for all $x\in \cA$. Putting $M(e)=b$ and $x=e$ in the second
equality of (\ref{Lainv}) yields $e=ab$. Putting $x=bs$ in the first
equality of (\ref{Lainv}) yields $M(s)=M(abs)=ML_a(bs)=bs$ for all
$s\in\cA$. Therefore $M=L_b$. Finally putting $x=e$ in the first
equality of (\ref{Lainv}) yields $ba=L_bL_ae=e$ so $b=a^{-1}$.
\end{proof}

We say that a non-zero element $a$ in an algebra $\cA$ over $\F$ is
\emph{algebraic} if there exists a non-zero polynomial $p$ with
coefficients in $\F$ such that $p(a)=0$. The set of all such
polynomials is a non-zero, proper ideal $\cJ$ in the algebra $\cP$
of all polynomials, and contains a unique monic polynomial $m$ of
lowest degree, called the minimum polynomial of $a$. Moreover
$\cJ=\{ mq: q\in\cP\}$. If $\cA$ is finite-dimensional then all
$a\in\cA$ are algebraic, but there are important
infinite-dimensional algebras in which the same holds.

\begin{example}\label{tensor}
Let $\cA_r=M(n_r,\F)$ for all positive integers $r$ and put
$\cA=\bigotimes_{r=1}^\infty \cA_r$. The algebra $\cA$ is generated
by expressions $x=\otimes_{r=1}^\infty x_r$ where all but a finite
number of the $x_r$ equal the identity element of $\cA_r$.
Equivalently $\cA=\bigcup_{m=1}^\infty\cB_m$ where
\[
\cB_m=\bigotimes_{r=1}^m \cA_r\simeq M(n_1n_2\ldots n_m,\F).
\]
Clearly every element of $\cA$ is algebraic, but there is no upper
bound on the degrees of the minimum polynomials.
\end{example}

\begin{theorem}\label{alginvert}
Let $a$ be an algebraic element of the subalgebra $\cA$ of the
algebra $\cB$, where $\cA$ and $\cB$ have the same identity element
$e$. If $a$ has a left or right inverse $b$ in $\cB$ then it has a
two-sided inverse in $\cA$. The spectrum of $a$ in either $\cA$ or
to $\cB$ equals the set of zeros of its minimum polynomial, and must
be finite.
\end{theorem}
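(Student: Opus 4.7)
My plan is to reduce everything to the structure of the minimum polynomial. Write $m(z) = z^k + c_{k-1}z^{k-1} + \cdots + c_1 z + c_0$ for the minimum polynomial of $a$, so that $m(a) = 0$. The key preliminary observation is that $c_0 \neq 0$ whenever $a$ admits a one-sided inverse in $\cB$: otherwise $m(z) = z\, n(z)$ with $\deg n < \deg m$, so $a\, n(a) = 0 = n(a)\, a$ while $n(a) \neq 0$ by minimality; multiplying $a n(a) = 0$ on the left by an alleged left inverse of $a$ (or $n(a) a = 0$ on the right by a right inverse) forces $n(a) = 0$, a contradiction. Once $c_0 \neq 0$, I rearrange $m(a) = 0$ into $a \cdot p(a) = e$ for an explicit polynomial $p \in \F[z]$; since $p(a)$ commutes with $a$ it is automatically a two-sided inverse, and it lies in $\cA$ because $e, a \in \cA$.

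For the spectral statement, I apply the invertibility conclusion to the (still algebraic) element $a - \lam e$ for each $\lam \in \F$. If $m(\lam) \neq 0$, then $z - \lam$ and $m(z)$ are coprime in $\F[z]$, so Bezout's identity supplies polynomials $p, q$ with $p(z)(z-\lam) + q(z) m(z) = 1$; evaluating at $a$ yields $p(a)(a - \lam e) = e = (a - \lam e) p(a)$ because polynomials in $a$ commute, so $a - \lam e$ is invertible inside $\cA$. Conversely, if $m(\lam) = 0$, factor $m(z) = (z-\lam) r(z)$ with $\deg r < \deg m$, so that $r(a) \neq 0$ yet $(a - \lam e) r(a) = r(a)(a - \lam e) = 0$; any hypothetical left or right inverse of $a - \lam e$ in $\cB$, multiplied into one of these two identities, would force $r(a) = 0$, contradicting minimality. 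Hence $\lam \in \spec(a)$ in both $\cA$ and $\cB$, and each spectrum coincides with the root set of $m$, which is finite because $\deg m < \infty$.

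The theorem is really a bundle of short algebraic observations and I do not foresee a single hard step; the main thing to watch is that the hypothesis furnishes only a one-sided inverse in the ambient algebra $\cB$, so each part of the argument must be set up symmetrically for left and right inverses, exploiting at crucial moments the fact that polynomials in $a$ commute with $a$ itself. One should also note that throughout, the explicit inverses produced are polynomials in $a$ with coefficients in $\F$, which is exactly what makes the inverses lie in $\cA$ rather than just in $\cB$.
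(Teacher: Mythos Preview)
Your proof is correct and follows essentially the same approach as the paper: both arguments show that the constant term of the minimum polynomial is nonzero by multiplying $a\cdot n(a)=0$ (or $n(a)\cdot a=0$) by the one-sided inverse, and then read off an explicit two-sided inverse as a polynomial in $a$. The only minor difference is in the spectral statement: the paper observes that the minimum polynomial of $a-\lam e$ is $m_\lam(s)=m(s+\lam)$ and reapplies the first part, whereas you handle the two directions separately via B\'ezout and factorization; both routes are routine and yield the same conclusion.
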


\begin{proof} Let $m$ be the minimum polynomial of $a$ and assume that $a$ has a right inverse $b\in\cB$. If $m(\lam)=\lam^n+\sum_{r=0}^{n-1} \alp_r \lam^r$ and $\alp_0=0$ then one would have
\begin{eqnarray*}
\lefteqn{a^{n-1}+\alp_{n-1}a^{n-2}+\ldots + \alp_2a+\alp_1e}\\
&=&\left( a^{n-1}+\alp_{n-1}a^{n-2}+\ldots +\alp_2a+ \alp_1e\right)ab\\
&=& m(a)b\\
&=&0.
\end{eqnarray*}
This contradicts the assumption that $m$ is the minimum polynomial,
so we deduce that $\alp_0\not= 0$. The same conclusion holds if we
assume that $a$ has a left inverse in $\cB$. It now follows
immediately that
\[
a^{-1}=-\alp_0^{-1}\left( a^{n-1}+\alp_{n-1}a^{n-2}+\ldots +
\alp_1e\right)\in \cA.
\]
The above calculations show that $a$ is invertible (in either $\cA$
or $\cB$) if and only if $m(0)\not=0$. Since the minimum polynomial
of $a-\lam e$ is $m_\lam(s)=m(s+\lam)$, we conclude that
$\lam\in\spec(a)$ if and only if $m(\lam)=0$.
\end{proof}

It is sometimes useful to distinguish between the left and right
spectra of an element $a$ of the algebra $\cA$. One says that
$\lam\in\spec_L(a)$ if there does not exist $b\in\cA$ such that
$b(\lam e-a)=e$, and that $\lam\in\spec_R(a)$ if there does not
exist $c\in\cA$ such that $(\lam e-a)c=e$. One sees immediately that
\[
\spec(a)=\spec_L(a)\cup\spec_R(a).
\]

The elements $b,\, c$ above need not be unique. Indeed, if there is
a unique solution $b\in\cA$ of $ab=e$ then the identity
\[
a(b+ba-e)=ab+(ab)a-a=e
\]
implies that $b$ is the two-sided inverse of $a$.

\begin{lemma}\label{LRspeclemma}
If $a$ is an algebraic element of $\cA$ then
\[
\spec_L(a)=\spec_R(a)=\spec(a)
\]
for all $a\in\cA$.
\end{lemma}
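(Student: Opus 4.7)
The plan is to observe that the lemma is essentially a direct corollary of Theorem~\ref{alginvert}. Since we already have the trivial inclusion $\spec(a)=\spec_L(a)\cup\spec_R(a)$, both $\spec_L(a)\subseteq\spec(a)$ and $\spec_R(a)\subseteq\spec(a)$ hold for free. Hence it suffices to prove the reverse inclusions, i.e.\ that any $\lam\in\F$ with a one-sided inverse for $\lam e-a$ actually lies outside the (two-sided) spectrum.

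To do this, first I would note that if $a$ is algebraic with minimum polynomial $m$, then for each $\lam\in\F$ the shifted element $a-\lam e$ is also algebraic, with minimum polynomial $m_\lam(s)=m(s+\lam)$ (this was already observed in the proof of Theorem~\ref{alginvert}). Now suppose $\lam\notin\spec_L(a)$; then there exists $b\in\cA$ with $b(\lam e-a)=e$, i.e.\ $a-\lam e$ has a left inverse inside $\cA$. Applying Theorem~\ref{alginvert} with $\cB=\cA$ to the algebraic element $a-\lam e$, we conclude that $a-\lam e$ has a two-sided inverse in $\cA$, hence $\lam\notin\spec(a)$. The argument for $\spec_R(a)$ is identical, using the right-inverse hypothesis of Theorem~\ref{alginvert}.

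Combining $\spec_L(a)\subseteq\spec(a)\subseteq\spec_L(a)$ and the analogous pair for $\spec_R$ yields the claimed three-way equality. There is really no obstacle here beyond recognising that Theorem~\ref{alginvert} was stated in precisely the form needed: it upgrades a one-sided inverse of an algebraic element to a two-sided inverse within the same (sub)algebra, which is exactly what distinguishes $\spec_L$ and $\spec_R$ from $\spec$.
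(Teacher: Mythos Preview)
Your proof is correct and follows essentially the same route as the paper: both argue by contrapositive that $\lam\notin\spec_L(a)$ (resp.\ $\spec_R(a)$) implies $\lam\notin\spec(a)$ by applying Theorem~\ref{alginvert} to the algebraic element $\lam e-a$, with the reverse inclusions being trivial consequences of $\spec(a)=\spec_L(a)\cup\spec_R(a)$.
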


\begin{proof}
Let $m$ be the minimum polynomial of $\lam e -a$. If $\lam\notin
\spec_L(a)$ then there exists $b\in\cA$ such that $b(\lam e-a)=e$.
Since $\lam e-a$ is algebraic, Theorem~\ref{alginvert} implies that
$\lam\notin \spec(a)$. Hence $\spec(a)\subseteq \spec_L(a)$. The
inclusion $\spec(a)\subseteq \spec_R(a)$ has a similar proof and the
reverse inclusions are elementary.
\end{proof}

\begin{example}
The condition in Lemma~\ref{LRspeclemma} that the element $a$ is
algebraic cannot be omitted. Let $\cA$ be the algebra of all bounded
linear operators on the Hilbert space $\ell^2(\Z^+)$ and let
$a\in\cA$ be defined by $(a\phi)(n)=\phi(n+1)$ for all $\phi \in
\ell^2(\Z^+)$ and all $n\geq 1$. Then
\begin{eqnarray*}
\spec_L(a)&=&\{ z\in\C: |z|\leq 1\},\\
\spec_R(a)&=&\{ z\in\C: |z|= 1\}.
\end{eqnarray*}
The proof of the first identity uses $(\lam e-a)\phi_\lam=0$
whenever $|\lam |<1$, where $\phi_\lam\in \ell^2(\Z^+)$ is defined
by $\phi_\lam(n)=\lam ^n$ for all $n\geq 1$. The proof of the second
identity uses $aa^\ast=e$ and $ (\lam e-a)c_\lam=e$ whenever $|\lam
|<1$, where $c_\lam =-a^\ast(e-\lam a^\ast)^{-1}$.
\end{example}

We next prove a spectral mapping theorem for polynomials.

\begin{theorem}
If the field $\F$ is algebraically closed, $a\in \cA$ and $p$ is a
polynomial then
\[
\spec(p(a))=\{p(z):z\in \spec(a)\}.
\]
\end{theorem}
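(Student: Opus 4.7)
The strategy is to combine the factorization of polynomials over the algebraically closed field $\F$ with a lemma stating that a product of pairwise commuting elements of $\cA$ is invertible iff each factor is. Assume $p$ is non-constant of degree $n$ with leading coefficient $\gam\in\F\setminus\{0\}$ (the constant case is trivial once $\spec(a)$ is non-empty). For each $\mu\in\F$, algebraic closure yields the factorization
\[
p(z)-\mu=\gam\prod_{i=1}^n(z-\lam_i(\mu)),
\]
where $\lam_1(\mu),\ldots,\lam_n(\mu)\in\F$ are precisely the solutions of $p(z)=\mu$ in $\F$. Substituting $a$, and using that any two polynomials in $a$ commute, yields
\[
p(a)-\mu e=\gam\prod_{i=1}^n(a-\lam_i(\mu)e).
\]

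The key auxiliary lemma I would prove is that if $b_1,\ldots,b_n\in\cA$ pairwise commute, then $B=b_1\cdots b_n$ is invertible in $\cA$ iff every $b_i$ is. The reverse direction is the obvious $B^{-1}=b_n^{-1}\cdots b_1^{-1}$. For the forward direction, if $d\in\cA$ satisfies $dB=Bd=e$, commutativity lets me rewrite $B=b_i(\prod_{j\neq i}b_j)=(\prod_{j\neq i}b_j)b_i$ for each $i$, so $(\prod_{j\neq i}b_j)d$ and $d(\prod_{j\neq i}b_j)$ serve respectively as a right- and a left-inverse of $b_i$; the standard identity $l=l(b_ir)=(lb_i)r=r$ forces them to coincide, making $b_i$ invertible in $\cA$.

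Applying the lemma to the commuting factors $(a-\lam_i(\mu)e)$, and noting that the nonzero scalar $\gam$ is invertible in $\cA$, I conclude that $p(a)-\mu e$ is invertible iff every $\lam_i(\mu)$ lies outside $\spec(a)$. Since $\{\lam_1(\mu),\ldots,\lam_n(\mu)\}=\{z\in\F:p(z)=\mu\}$ as sets, this is exactly the statement that $\mu\in\spec(p(a))$ iff $\mu\in\{p(z):z\in\spec(a)\}$, so both inclusions of the theorem follow simultaneously.

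The main obstacle is the commuting-invertibility lemma: $\cA$ is not assumed commutative, and it is essential to exploit that the specific elements $a-\lam_i(\mu)e$ commute with one another even though arbitrary pairs of elements of $\cA$ do not. Once that is in hand, algebraic closure of $\F$ and standard polynomial factorization carry out the remaining bookkeeping.
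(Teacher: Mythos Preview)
Your argument is correct and follows essentially the same route as the paper: factor $p(z)-\mu$ over the algebraically closed field, substitute $a$, and reduce to the statement that a product of pairwise commuting elements of $\cA$ is invertible iff each factor is. The paper declares that last step elementary without proof, whereas you spell out the left-inverse/right-inverse argument; otherwise the two proofs coincide.
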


\begin{proof}
Suppose that $p$ is monic and of degree $n$. The statement of the
theorem can be rewritten in several ways each of which is equivalent
to the next one.
\begin{itemize}
\item
Given $y\in\F$ then $y\in\spec(p(a))$ if and only if there exists
$x\in\spec(a)$ such that $y=p(x)$.

\item Let
\[
q(s)=p(s)-y=\prod_{r=1}^n (s-s_r),
\]
where $s_r$ depend on $y$. Then $0\in \spec(q(a))$ if and only if
there exists $x\in\spec(a)$ such that $q(x)=0$.

\item
$0\in \spec(q(a))$ if and only if there exists $x\in\spec(a)$ and
$r\in\{1,\ldots,n\}$ such that $x-s_r=0$.

\item
$0\in \spec(q(a))$ if and only if there exists $r\in\{1,\ldots,n\}$
such that $s_r\in\spec(a)$.

\item
$q(a)$ is not invertible if and only if there exists
$r\in\{1,\ldots,n\}$ such that $a-s_re$ is not invertible.

\item
$q(a)$ is invertible if and only if  $a-s_re$ is  invertible for all
$r\in\{1,\ldots,n\}$.
\end{itemize}
The truth of the final statement is an elementary consequence of the
fact that $q(a)$ is the product of the $a-s_re$, all of which
commute in $\cA$.
\end{proof}

Without the hypothesis that $\F$ is algebraically closed, the
theorem is false. For example if $\F=\R$ and $a=\txtmtrx{1&1\\-1&1}$
then $\spec(a)=\emptyset$ but $\spec(a^4)=\{-4\}$.

\section{Commutative Algebras}

In this section we present an algebraic version of Gelfand's
representation of commutative Banach algebras as function algebras.

Let $\cA$ be a commutative algebra with identity $e$ over a field
$\F$ and let $J_m$ denote its maximal proper ideals, which
parametrized by $m\in M$. For each $m\in M$, $\F_m=\cA/J_m$ is a
commutative algebra with no proper ideals and hence is a field
containing $\F$. The natural homomorphism $\phi_m:\cA\to \F_m$ is
called a character of $\cA$.

The set $M$ of all maximal ideals in $\cA$ is called the
max-spectrum of $\cA$ in algebraic geometry to distinguish it from
the spectrum, namely the set $N$ of all prime ideals. Elements of
$M$ correspond to algebra homomorphisms mapping $\cA$ onto a field,
which we call characters, while elements of $N$ correspond to
algebra homomorphisms mapping $\cA$ into a field, or equivalently
onto an integral domain. Every maximal ideal is obviously prime.

Determining the maximal ideals, prime ideals or characters in an
algebra is a highly non-trivial matter. Hilbert's Nullstellensatz
implies that if $\cA$ is the algebra of polynomials on $\C^n$ then
every character is obtained by evaluation at some point of $\C^n$;
see \cite[p.~85]{AM}. For some other algebras of unbounded
continuous functions this is not true and the question is much
harder.

The Gelfand formula
\[
\wdh{a}(m)=\phi_m (a)
\]
takes an element $a\in\cA$ and maps it by an algebra homomorphism to
$\wdh{a}\in \cF(M)$, where $\cF(M)$ is the algebra of all functions
$f$ on $M$ such that $f(m)\in\F_m$. (More correctly $\cF(M)$ is the
algebra of sections of a certain bundle over $M$.)

\begin{theorem}\label{comminv}
The element $a\in\cA$ is invertible if and only if $\wdh{a}(m)\not=
0$ for all $m\in M$. If $a\in\cA$ then
\[
\spec(a)=\F\cap\{ \wdh{a}(m):m\in M\}.
\]
\end{theorem}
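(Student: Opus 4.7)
The plan is to establish the invertibility criterion first and then derive the spectrum description by applying that criterion to $a-\lam e$.

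For the invertibility criterion, one direction is immediate: if $ab=e$ in $\cA$ and $m\in M$, then applying the algebra homomorphism $\phi_m$ gives $\phi_m(a)\phi_m(b)=1$ in the field $\F_m$, so $\wdh{a}(m)\not=0$ for every $m$. For the converse, I would argue contrapositively. Suppose $a$ is not invertible. Since $\cA$ is commutative, any one-sided inverse of $a$ is automatically two-sided, so the principal ideal $a\cA=\{ab:b\in\cA\}$ does not contain $e$ and is therefore a proper ideal containing $a$. By Zorn's lemma it is contained in some maximal ideal $J_m$, and then $\wdh{a}(m)=\phi_m(a)=0$.

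For the spectrum statement, I would observe that $\lam\in\spec(a)$ is equivalent (by commutativity) to $a-\lam e$ failing to be invertible. Applying the criterion just proved to $a-\lam e$, this occurs iff there exists $m\in M$ with $\phi_m(a-\lam e)=0$. Since $\phi_m$ is an $\F$-algebra homomorphism with $\phi_m(e)=1_{\F_m}$, linearity gives $\phi_m(a-\lam e)=\wdh{a}(m)-\lam\cdot 1_{\F_m}$, so the vanishing condition reads $\wdh{a}(m)=\lam$ in $\F_m$ under the canonical embedding $\F\hookrightarrow\F_m$. This is exactly the assertion that $\lam\in\F\cap\{\wdh{a}(m):m\in M\}$.

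The main obstacle is the careful handling of the embedding $\F\hookrightarrow\F_m$: the value $\wdh{a}(m)$ lives a priori in the (potentially much larger) field $\F_m$, and the intersection with $\F$ on the right-hand side is precisely what selects those $m$ for which $\wdh{a}(m)$ reduces to a scalar from $\F$. Once this is recognized, the remainder is a direct use of Zorn's lemma and the elementary homomorphism properties of the characters.
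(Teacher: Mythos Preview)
Your proof is correct and follows essentially the same route as the paper: the forward direction via the homomorphism property of $\phi_m$, the converse by embedding the proper ideal $a\cA$ in a maximal ideal, and the spectrum formula by applying the criterion to $a-\lam e$. Your explicit mention of Zorn's lemma and the careful remark about the embedding $\F\hookrightarrow\F_m$ add a bit of detail beyond the paper, but the argument is the same.
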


\begin{proof}
If $a$ is invertible and $ab=e$ then
\[
\wdh{a}(m)\wdh{b}(m)=\phi_m(a)\phi_m(b)=\phi_m(ab)=e,
\]
so $\wdh{a}(m)\not= 0$ for every $m\in M$. Conversely if $a$ is not
invertible then $a\cA$ is a proper ideal in $\cA$ and is contained
in some maximal proper ideal $J_m$. Since $a\in J_m$ one has
$\wdh{a}(m)=0$.

The final statement of the theorem is proved by applying the above
to $a-\lam e$ for every choice of $\lam\in \F$.
\end{proof}

\begin{example}
Let $I=[0,1]$ and let $\cA_1$ be the algebra over $\Q$ consisting of
all continuous functions $f:I\to \R$ such that $f(\Q\cap I)\subseteq
\Q$, where $\Q$ is the field of rational numbers. Given a positive
integer $n$ and any sequence $a_0,a_1,\ldots , a_n\in\Q$ one may
define a function $f\in \cA_1$ by linear interpolation starting from
the values $f(r/n)=a_r$ where $0\leq r\leq n$. This construction may
be used to prove that $\cA_1$ is uniformly dense in the algebra of
all continuous real-valued functions on $I$.

For each $a\in I$ one may define the character $\phi_a$ on $\cA_1$
by
\begin{equation}
\phi_a(f)=f(a).\label{commchar}
\end{equation}
One notes that $\F_a=\Q$ if $a\in \Q\cap I$ but $\F_a=\R$ otherwise.
\end{example}

\begin{theorem}
Every character $\phi$ of $\cA_1$ is of the form $\phi_a$ for some
$a\in I$.
\end{theorem}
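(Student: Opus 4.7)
The plan is to follow a classical Gelfand-style argument: show that the kernel of $\phi$ is precisely the ideal of functions vanishing at some point $a\in I$, and then invoke the fact that a character is determined by its maximal ideal to conclude $\phi=\phi_a$.

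Let $J=\ker\phi$, a maximal proper ideal of $\cA_1$. First I would show that the functions in $J$ have a common zero in $I$ using compactness. If not, then for each $a\in I$ there exists $f_a\in J$ with $f_a(a)\neq 0$, and by continuity $f_a$ is nonzero throughout some open neighborhood $U_a$ of $a$ in $I$. Compactness of $I$ yields a finite subcover $U_{a_1},\ldots,U_{a_n}$, so that
\[
g=\sum_{i=1}^n f_{a_i}^2
\]
lies in $J$ (which is closed under sums and squares) and satisfies $g(x)>0$ for every $x\in I$.

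The crucial step is to check that $g$ is invertible \emph{within} $\cA_1$, not merely as a real-valued continuous function on $I$. Since $g$ is continuous and strictly positive on the compact set $I$, it is bounded below by a positive constant, so $1/g$ is continuous on $I$. Moreover each $f_{a_i}$ sends $\Q\cap I$ into $\Q$, hence so does $g$, and since $g$ takes only positive rational values at rational points, $1/g$ also maps $\Q\cap I$ into $\Q$. Thus $1/g\in\cA_1$ and $g$ is invertible, contradicting $g\in J$ and the properness of $J$.

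Therefore $J\subseteq J_a:=\{f\in\cA_1:f(a)=0\}$ for some $a\in I$. The ideal $J_a$ is proper (it misses the constant function $1$), so by maximality $J=J_a=\ker\phi_a$; since a character of $\cA_1$ is determined by its maximal ideal, $\phi=\phi_a$. The main obstacle is the verification that $1/g\in\cA_1$: this is the single point where the defining feature of $\cA_1$ -- rational values at rational points -- is genuinely used, and without it the compactness step would only place the inverse of $g$ inside the ambient algebra of continuous real-valued functions on $I$ rather than in $\cA_1$.
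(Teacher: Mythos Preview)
Your argument is correct and follows essentially the same route as the paper: assume the character is not an evaluation, use maximality to find for each $a\in I$ an $f_a\in J$ with $f_a(a)\neq 0$, invoke compactness to build a strictly positive sum of squares in $J$, and derive a contradiction from invertibility. Your write-up is in fact more careful than the paper's, since you explicitly verify that $1/g$ lies in $\cA_1$ (rational values at rational points), a point the paper leaves implicit.
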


\begin{proof}
Suppose that $\phi$ is not of the stated form. For every $a\in I$
the kernels $J$ and $J_a$ of $\phi$ and $\phi_a$ respectively are
maximal proper ideals, so there  exists $f_a\in J$ such that
$f_a(a)\not= 0$. By a compactness argument there exist $a_1,\ldots,
a_m\in I$ such that $h:=\sum_{r=1}^m |f_{a_r}|^2>0$ on $I$. This
yields a contradiction, because $h\in J$ and a proper ideal cannot
contain an invertible element.
\end{proof}

\begin{example}
The algebra $\cA_2=C_b(\R)$ of all bounded, continuous,
complex-valued functions $f$ on $\R$ is a C*-algebra for the norm
\[
\norm f\norm =\sup\{ |f(x)|: x\in \R\},
\]
and for no other norm; see \cite[Theorem~3.2.6]{dales}. The
characters of $\cA_2$ are all norm continuous and are in one-one
correspondence with the points in the Stone-\v{C}ech
compactification of $\R$ and all characters are complex-valued; see
\cite[Def.~ 4.2.6]{dales}. The only characters that can be
identified constructively are those given by the formula
(\ref{commchar}) where $a\in \R$.
\end{example}

\begin{example}
The algebra $\cA_3= C(\R)$ of all continuous, complex-valued
functions on $\R$ has `finite' characters given by (\ref{commchar}).
We say that a character $\phi$ on $\cA_3$ is `at infinity' if it is
not of this form. Lemma~\ref{inftychar1} and
Theorem~\ref{inftychar2} give some insight into the form of the
characters at infinity.
\end{example}

\begin{lemma}\label{inftychar1}
Every character $\phi$ at infinity on $\cA_3$ has a kernel $J_\phi$
that contains the ideal
\begin{equation}
J=\{ f\in \cA_3: \supp(f) \mbox{ is compact }\}.\label{Jideal}
\end{equation}
\end{lemma}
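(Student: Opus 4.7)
The strategy is to prove the contrapositive: if some $f \in J$ satisfies $\phi(f) \neq 0$, then $\phi$ must be a point-evaluation $\phi_a$, contradicting the assumption that $\phi$ is at infinity. Fix such an $f$, write $K = \supp(f)$, a compact subset of $\R$, and let $I_K := \{ g \in \cA_3 : g|_K = 0\}$. For any $g \in I_K$ the product $gf$ vanishes on all of $\R$ (on $K$ because $g$ does, off $K$ because $f$ does), so $\phi(g)\phi(f) = \phi(gf) = 0$ in the field $\cA_3/J_\phi$, and because $\phi(f) \neq 0$ we conclude $\phi(g) = 0$, i.e.\ $I_K \subseteq J_\phi$. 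By the Tietze extension theorem the restriction map $\cA_3 \to C(K)$ is surjective with kernel $I_K$, so $\cA_3/I_K \cong C(K)$, and $\phi$ descends to a character $\psi$ of $C(K)$.

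Next I would show that $\psi$ is evaluation at some point $a \in K$. If not, then for every $a \in K$ the maximal ideals $\ker\psi$ and $\{ g \in C(K) : g(a)=0\}$ are distinct, so neither contains the other, and there exists $h_a \in \ker\psi$ with $h_a(a) \not= 0$. Because $\ker\psi$ is an ideal and $\overline{h_a} \in C(K)$, the element $|h_a|^2 = h_a \overline{h_a}$ lies in $\ker\psi$ and is strictly positive on some open neighbourhood $U_a$ of $a$ in $K$. Compactness of $K$ furnishes a finite subcover $U_{a_1}, \ldots, U_{a_n}$, and $G := \sum_{i=1}^n |h_{a_i}|^2$ then lies in $\ker\psi$ yet is strictly positive on all of $K$, hence invertible in $C(K)$; this contradicts $\psi(G) = 0$, since a character must send invertible elements to non-zero elements.

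Therefore $\psi = \psi_a$ for some $a \in K$, which gives $\phi(g) = g(a)$ for every $g \in \cA_3$, contradicting the hypothesis that $\phi$ is at infinity. The main technical obstacle is the second paragraph: converting the pointwise non-vanishing $h_a(a) \not= 0$ into a single globally invertible element of $\ker\psi$, which is made possible by the sum-of-squares trick $h \mapsto h\overline{h}$ (available because complex conjugation preserves the algebra $C(K)$, even though $\psi$ need not respect it) combined with compactness of $K$.
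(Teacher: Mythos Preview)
Your proof is correct, and the engine driving it---compactness of the support together with the sum-of-squares construction $\sum |h_{a_i}|^2$ to manufacture an invertible element inside a proper ideal---is exactly what the paper uses. The packaging differs slightly: the paper never leaves $\cA_3=C(\R)$. It fixes $f$ with $\supp(f)\subseteq(-n,n)$, picks for each $a\in[-n,n]$ an $f_a\in J_\phi$ with $f_a(a)\neq 0$ (possible because $J_\phi\neq J_a$ are distinct maximal ideals), extracts a finite subcover to get $h=\sum_r|f_{a_r}|^2>0$ on $[-n,n]$, and then writes $f=hk$ for some $k\in C(\R)$, so $f\in J_\phi$ directly. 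Your version first passes to the quotient $\cA_3/I_K\cong C(K)$ via Tietze and then runs the identical compactness argument there; this isolates the ``every character of $C(K)$ is a point evaluation'' statement as a clean sublemma, at the mild cost of invoking Tietze. Neither approach needs the character to be $\C$-valued, and your remark that $h\overline{h}\in\ker\psi$ follows from the ideal property (not from any $*$-compatibility of $\psi$) is exactly the point.
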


\begin{proof}
Suppose that $f\in\cA_3$ has support contained in $(-n,n)$ for some
$n$. Let $a\in [-n,n]$. Since $J_\phi$ and $J_a=\{
f\in\cA_3:f(a)=0\}$ are both maximal proper ideals, there exists
$f_a\in J_\phi$ such that $f_a(a)\not= 0$. By a compactness argument
there exist $a_1,\ldots, a_m\in [-n,n]$ such that $h:=\sum_{r=1}^m
|f_{a_r}|^2>0$ on $[-n,n]$. Since $f=hk$ for some $k\in \cA_3$, it
follows that $f\in J_\phi$. This proves the lemma.
\end{proof}

\begin{theorem}\label{inftychar2}
If $\phi$ is a character of $\cA_3$ at infinity then the field
$\phi(\cA)$ contains the field $G$ of all complex-valued rational
functions.
\end{theorem}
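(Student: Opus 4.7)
The plan is to exhibit an element of $\phi(\cA_3)$ that is transcendental over $\C$; since $\phi(\cA_3)$ is a field containing $\C$, any such element automatically generates a subfield isomorphic to $G=\C(x)$.

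First I would take $\pi\in\cA_3$ to be the identity function $\pi(t)=t$ (which lies in $C(\R)$ but not in $C_b(\R)$, so is unavailable in the preceding example), and set $z=\phi(\pi)\in\F_\phi:=\phi(\cA_3)$. Because $\phi$ is a $\C$-algebra homomorphism, for every $p\in\C[x]$ we have $\phi(p(\pi))=p(z)$, so it suffices to show that $p(z)\ne 0$ for each non-zero polynomial $p$.

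The contradiction argument runs as follows. Suppose $p(z)=0$ for some non-zero $p\in\C[x]$. Then $p(\pi)\in J_\phi:=\Ker\phi$, and since $J_\phi$ is an ideal, the product $h:=p(\pi)\cdot\overline{p}(\pi)$ also lies in $J_\phi$, where $\overline p$ denotes the polynomial with conjugated coefficients. Viewed as a function on $\R$, $h(t)=|p(t)|^2\ge 0$, and its zero set is precisely the finite real zero set $\{a_1,\ldots,a_k\}$ of $p$. Choose a non-negative, continuous, compactly supported $\chi\in\cA_3$ satisfying $\chi\ge 1$ on an open neighbourhood of $\{a_1,\ldots,a_k\}$. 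By Lemma~\ref{inftychar1}, $\chi\in J\subseteq J_\phi$, hence $h+\chi\in J_\phi$. But $h+\chi$ is continuous and strictly positive on all of $\R$, so $(h+\chi)\inv\in\cA_3$ and $h+\chi$ is invertible — impossible in a proper ideal.

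Hence $z$ is transcendental over $\C$, and the evaluation map $p(x)/q(x)\mapsto p(z)/q(z)$ yields a well-defined injective field homomorphism $G=\C(x)\hookrightarrow\phi(\cA_3)$, which is the assertion of the theorem. The main obstacle is precisely the contradiction step: one needs to manufacture an actual invertible element inside $J_\phi$ from an abstract algebraic relation. The essential leverage is Lemma~\ref{inftychar1}, which allows a compactly supported bump to be added at no cost in order to plug the finitely many zeros of $|p|^2$ without leaving $J_\phi$.
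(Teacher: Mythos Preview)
Your argument is correct. The reduction to showing that $z=\phi(\pi)$ is transcendental over $\C$ is sound, and the contradiction step is fine: $h=|p(\pi)|^2$ lies in $J_\phi$, the bump $\chi$ lies in $J\subseteq J_\phi$ by Lemma~\ref{inftychar1}, and $h+\chi$ is strictly positive on $\R$ (on the chosen neighbourhood of the real zeros $h+\chi\geq 1$, and off it $h>0$ while $\chi\geq 0$), hence invertible in $C(\R)$.

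The paper takes a different, more structural route. Instead of focusing on the single element $\pi$, it builds for every rational function $g\in G$ a continuous representative $\rho(g)\in\cA_3$ (equal to $g$ outside a compact set containing its poles, linearly interpolated inside) and checks that $g\mapsto \rho(g)+J$ is an injective ring homomorphism $G\hookrightarrow \cA_3/J$. Since any character at infinity factors through $\cA_3/J$ and a field has no proper ideals, the composite $G\hookrightarrow \cA_3/J\to \phi(\cA_3)$ is injective. Your approach and the paper's in fact produce the \emph{same} embedding (the paper's $\rho(x)$ is exactly your $\pi$), but yours reaches it by a shorter path: one transcendence argument replaces the verification that $\rho$ is multiplicative and additive modulo $J$. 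The paper's version, on the other hand, yields the slightly stronger intermediate statement that $\cA_3/J$ itself already contains a copy of $G$, independent of any particular character.
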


\begin{proof}
Every $g\in G$ has a finite number of poles, so there exists a
smallest integer $n\geq 0$ such that $g(x)$ is continuous on
$\{x\in\R:|x|\geq n\}$. We define $\rho(g)\in\cA_3$ to be equal to
$g$ on $\{x\in\R:|x|\geq n\}$ and by linear interpolation in
$[-n,n]$.

If $J$ is defined by (\ref{Jideal}), then
\[
\rho(g_1)+\rho(g_2)-\rho(g_1+g_2)\in J ,\hspace{2em}
\rho(g_1)\rho(g_2)-\rho(g_1g_2)\in J
\]
for all $g_1,\, g_2\in G$. Therefore $g\to \rho(g)+J$ defines a
one-one structure preserving map from $G$ into $\cA/J$. By an abuse
of notation one may say that $G\subseteq \cA/J$.

There is a one-one correspondence between ideals in $\cA/J$ and
ideals in $\cA$ that contain $J$. Similarly, the characters on $\cA$
whose kernels contain $J$ may be identified with the characters on
the algebra $\cA/J$. The restriction of such characters to $G$,
regarded as a subfield of $\cA/J$, must be one-one, because a field
contains no proper ideals, so the field $\phi(\cA/J)$ must contain a
copy of $G$.
\end{proof}

\section{Resolvent families}

We now turn to the study of (pseudo) resolvent families; the ideas
here are taken from \cite{EBD2}. From this point onwards $\cA$ will
denote a (typically non-commutative) algebra over a field $\F$. A
resolvent family $(r,S)$ in $\cA$ consists of a set $S\subseteq \F$
together with a map $r:S\to \cA$ such that
\begin{equation}
r_\lam-r_\mu=(\mu-\lam)r_\lam r_\mu\label{reseq}
\end{equation}
for all $\lam,\, \mu\in S$. By interchanging the role of $\lam,\,
\mu$ one readily sees that this implies that $r_\lam$ and $r_\mu$
commute for all $\lam,\, \mu \in S$. We say that $(\til{r},\til{S})$
extends $(r,S)$ if $S\subseteq \til{S}$ and $\til{r}(\lam)=r(\lam)$
for all $\lam\in S$. We say that $(r,S)$ is maximal if it has no
proper extension.

If $a\in \cA$ and $S=\F\backslash \spec(a)$ then one obtains a
(true) resolvent family by putting $r_\lam=(\lam e-a)^{-1}$ for all
$\lam\in S$. (Pseudo) resolvent families are not all of this type;
if $a$ is an unbounded closed linear operator on a Banach space
$\cB$ and $\cA$ is the algebra of all bounded operators on $\cB$
then one may define the resolvent family associated with $a$ just as
in the case in which $a$ is bounded. The theory of such resolvent
families is very highly developed because of the role it plays in
the theory of differential operators and one-parameter semigroups,
\cite{LOTS}.

\begin{theorem}
A resolvent family $(r,S)$ is the family associated with an element
$a\in\cA$ if and only if it is maximal and $r_\lam$ is invertible
for some (equivalently all) $\lam\in S$. If this holds then
$\spec(a)=\F\backslash S$.
\end{theorem}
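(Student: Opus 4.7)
The plan is to split into the two implications, using the converse direction to simultaneously handle the ``some versus all'' clause and the spectrum identity.

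For the forward direction, if $(r,S)$ is the family associated with $a$, so that $S=\F\backslash\spec(a)$ and $r_\lam=(\lam e-a)^{-1}$, then invertibility of every $r_\lam$ is immediate. For maximality I would assume $(\til r,\til S)$ is any extension and fix $\mu\in\til S$. Writing the resolvent equation between a fixed $\lam\in S$ and $\mu$ and left-multiplying by $r_\lam^{-1}=\lam e-a$ yields $(\mu e-a)\til r_\mu=e$; doing the analogous manipulation from the other side gives $\til r_\mu(\mu e-a)=e$. Hence $\mu\notin\spec(a)$, so $\mu\in S$ and the family is maximal.

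For the converse, suppose $(r,S)$ is maximal and $r_{\lam_0}$ is invertible for some $\lam_0\in S$. Define $a:=\lam_0 e-r_{\lam_0}^{-1}\in\cA$. For any $\mu\in S$ the resolvent identity
\[
r_{\lam_0}-r_\mu=(\mu-\lam_0)r_{\lam_0}r_\mu
\]
rearranges, after left-multiplication by $r_{\lam_0}^{-1}$, to $(r_{\lam_0}^{-1}-(\lam_0-\mu)e)r_\mu=e$, i.e.\ $(\mu e-a)r_\mu=e$. Since $r_{\lam_0}$ and $r_\mu$ commute (by swapping $\lam,\mu$ in the resolvent equation), the same calculation from the right gives $r_\mu(\mu e-a)=e$. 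Therefore every $r_\mu$ with $\mu\in S$ is invertible and equals $(\mu e-a)^{-1}$; this simultaneously proves the equivalence of ``some'' and ``all'' in the hypothesis and gives the inclusion $S\subseteq\F\backslash\spec(a)$.

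To complete the spectrum identity I would use maximality to get the reverse inclusion $\F\backslash\spec(a)\subseteq S$. If $\lam\in\F$ satisfies $\lam\notin\spec(a)$, set $\til r_\lam:=(\lam e-a)^{-1}$. Since $(\mu e-a)$ and $(\lam e-a)$ commute their inverses do as well, and a direct algebraic check gives
\[
(\mu e-a)^{-1}-(\lam e-a)^{-1}=(\lam-\mu)(\mu e-a)^{-1}(\lam e-a)^{-1}
\]
for every $\mu\in S$, so $(\til r,S\cup\{\lam\})$ is a resolvent family extending $(r,S)$. By maximality $\lam\in S$, which finishes the proof.

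I do not expect a serious obstacle; the only point requiring care is that $\cA$ is noncommutative, so each invertibility assertion must be verified on both sides. This is handled uniformly by the fact, already noted in the text preceding the theorem, that $r_\lam$ and $r_\mu$ commute whenever both lie in the family, which also justifies the commutativity step in the extension argument.
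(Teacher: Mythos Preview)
Your proposal is correct and follows essentially the same route as the paper: both arguments multiply the resolvent identity by $r_\lam^{-1}$ (equivalently $\lam e-a$) to pass between invertibility of one $r_\lam$ and invertibility of all, to identify the candidate element $a=\lam_0 e-r_{\lam_0}^{-1}$, and to force any extension of the true resolvent family back into $\F\backslash\spec(a)$. The only cosmetic difference is ordering: the paper first proves ``some invertible $\Rightarrow$ all invertible and $r_\lam=(\lam e-a)^{-1}$ on $S$'' and then shows the true family on $\F\backslash\spec(a)$ is maximal, whereas you do the forward (maximality) direction first and make the reverse inclusion $\F\backslash\spec(a)\subseteq S$ explicit via a one-point extension; the computations are the same.
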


\begin{proof}
Suppose that $br_\mu=r_\mu b =e$ for some $b\in\cA$ and $\mu\in S$.
Multiplying \eqref{reseq} on the left (right) by $b$ yields $e=
(b-\mu e+\lam e) r_\lam =r_\lam (b-\mu e+\lam e)$, so $r_\lam$ is
invertible for all $\lam\in S$. Putting $a=\mu e-b$ then yields
$r_\lam=(\lam e-a)^{-1}$ for all $\lam\in S$.

Given $a\in\cA$ let $(r_0,S_0)$ be the resolvent family associated
with $a$, where $S_0=\F\backslash \spec(a)$. Suppose that it is not
maximal, that $(r,S)$ is a proper extension and that $\lam\in S\cap
\spec(a)$. Then $r_\lam$ is invertible with inverse which we denote
$b$ and $r_\mu=(\mu e-a)^{-1}$ for some chosen $\mu\in S$. If we
multiply \eqref{reseq} on the left by $b$ and on the right by $\mu
e-a$ we obtain
\[
\mu e-a -b=(\mu-\lam)e.
\]
Therefore $b=\lam e-a$; since $b$ is invertible we conclude that
$\lam\notin \spec(a)$. The contradiction implies that $(r_0,S_0)$ is
maximal.
\end{proof}

\begin{theorem}\label{unique}
Every resolvent family $(r,S)$ is uniquely determined on $S$ by
$r_\mu$ for a single chosen $\mu\in S$. Moreover $(r,S)$ has a
unique maximal extension $(\til{r},\til{S})$.
\end{theorem}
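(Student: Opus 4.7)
The plan is to prove the uniqueness statement first, then exploit it to construct the maximal extension in a direct, formula-driven way. The fundamental tool throughout is the rearranged identity
\[
r_\lam c_\lam = c_\lam r_\lam = r_\mu, \qquad c_\lam := e + (\lam - \mu) r_\mu,
\]
which follows from the resolvent equation after invoking the commutativity $r_\lam r_\mu = r_\mu r_\lam$ (itself a consequence of interchanging $\lam$ and $\mu$ in the defining identity).

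For the uniqueness statement, let $(r, S)$ and $(r', S)$ be resolvent families with $r_\mu = r'_\mu$. Subtracting the two resolvent equations shows that $d := r_\lam - r'_\lam$ satisfies $d = (\mu - \lam) d r_\mu$, while the rearranged identity applied to both $r_\lam$ and $r'_\lam$ gives $d c_\lam = 0$. The decisive step is to multiply $d c_\lam = 0$ on the right by $r_\lam$ and use $c_\lam r_\lam = r_\mu$ to obtain $d r_\mu = 0$; substituting back into $d = (\mu - \lam) d r_\mu$ then yields $d = 0$.

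For the maximal extension I construct it explicitly. Set
\[
\til{S} = \{ \lam \in \F : \exists\, b \in \cA \mbox{ with } c_\lam b = b c_\lam = r_\mu \},
\]
and define $\til{r}_\lam$ to be this $b$. Uniqueness of $b$ follows from the same annihilator trick applied to the difference of two candidates; moreover any such $b$ automatically commutes with $r_\mu$, since subtracting the two equations $c_\lam b = b c_\lam$ gives $(\lam - \mu)(b r_\mu - r_\mu b) = 0$. Any extension $(r', S')$ of $(r, S)$ automatically has its value $r'_\lam$ satisfying this pair of equations (by pairing $\lam$ with $\mu$ in the resolvent equation of $(r', S')$), so $(\til{r}, \til{S})$ will dominate every extension once it is shown to be one.

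The substantive verification is that $(\til{r}, \til{S})$ itself satisfies the full resolvent equation between arbitrary $\lam, \nu \in \til{S}$, not just pairings with $\mu$. Writing $A = \til{r}_\lam$, $B = \til{r}_\nu$, $M = r_\mu$ and $D = A - B - (\nu - \lam) AB$, a short telescoping computation using $A c_\lam = M = B c_\nu$ collapses $D c_\nu$ to $A c_\lam - M = 0$; the companion identity $c_\nu D = 0$ follows analogously, using the fact that $A$ commutes with $M$, hence with $c_\nu$. One final application of the uniqueness trick -- multiply $D c_\nu = 0$ on the right by $B$ to obtain $DM = 0$, then combine with $D = (\mu - \nu) DM$ derived from $c_\nu D = 0$ -- forces $D = 0$. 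I expect the bookkeeping in this last step to be the main obstacle: having both $D c_\nu = 0$ and $c_\nu D = 0$ simultaneously is what drives the cancellation, and this in turn requires carrying the two-sided form $c_\lam A = A c_\lam = M$ throughout, not just one of the two identities.
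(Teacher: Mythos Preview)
Your argument is correct but takes a harder road than the paper does. The paper's decisive observation is that for $\lam\in S$ the element $c_\lam=e+(\lam-\mu)r_\mu$ is actually \emph{invertible}: multiplying out $(e+(\lam-\mu)r_\mu)(e+(\mu-\lam)r_\lam)$ and using the resolvent identity gives $e$. Uniqueness then drops out of the explicit formula $r_\lam=c_\lam^{-1}r_\mu$, and the maximal extension is defined on the set $\til S=\{\lam:c_\lam\mbox{ is invertible}\}$ via that same formula; the resolvent equation on $\til S$ is a two-line computation with inverses. You never invoke invertibility of $c_\lam$, working instead with the weaker two-sided divisibility condition $c_\lam b=bc_\lam=r_\mu$ and your annihilator trick. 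This is valid, and a posteriori your $\til S$ coincides with the paper's (once $(\til r,\til S)$ is known to be a resolvent family, pairing any $\lam\in\til S$ with $\mu$ exhibits $e+(\mu-\lam)\til r_\lam$ as an inverse of $c_\lam$). The paper's route is shorter and yields an explicit closed formula for $\til r_\lam$; your route trades that for not having to spot the inverse, at the cost of repeating the annihilator manoeuvre three times. One small slip: the identity $D=(\mu-\nu)DM$ comes from $Dc_\nu=0$, not from $c_\nu D=0$; in fact $Dc_\nu=0$ alone already forces $D=0$ (multiply on the right by $B$ to get $DM=0$, then substitute), so the companion identity $c_\nu D=0$ is not actually needed for the final step.
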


\begin{proof}
Given $\alp,\, \lam\in  S$ the resolvent equation implies that
\[
(e+(\lam-\alp)r_\alp)(e+(\alp-\lam)r_\lam)=e.
\]
Therefore $e+(\lam-\alp)r_\alp$ is invertible and
\[
r_\lam=(\alp-\lam)^{-1}\left\{ (e+(\lam-\alp)r_\alp)^{-1} -e\right\}=%
(e+(\lam-\alp)r_\alp)^{-1}r_\alp.
\]
If we define
\[
\til{r}_\lam= (e+(\lam-\alp)r_\alp)^{-1}r_\alp
\]
on the set $\til{S}$ of all $\lam\in\F$ for which
$e+(\lam-\alp)r_\alp$ is invertible, then $\til{r}$ must be the
maximal extension of $r$ provided $\til{r}$ satisfies the resolvent
equation. If $\lam,\, \mu\in \til{S}$ then
\begin{eqnarray*}
(\mu-\lam)\til{r}_\lam
\til{r}_\mu&=&(\mu-\lam)r_\alp(e+(\lam-\alp)r_\alp)^{-1}%
(e+(\lam-\alp)r_\alp)^{-1}r_\alp\\
&=&\left\{ (e+(\mu-\alp)r_\alp)-(e+(\lam-\alp)r_\alp)\right\}\\
&&.(e+(\lam-\alp)r_\alp)^{-1} (e+(\mu-\alp)r_\alp)^{-1}r_\alp\\
&=&\left\{(e+(\lam-\alp)r_\alp)^{-1}-
(e+(\mu-\alp)r_\alp)^{-1}\right\}r_\alp\\
&=& \til{r}_\lam-\til{r}_\mu.
\end{eqnarray*}
\end{proof}

Motivated by the last two theorems, we define the spectrum
$\spec(r,S)$ of a resolvent family $(r,S)$ to be $\F\backslash
\til{S}$ where $(\til{r},\til{S})$ is its unique maximal extension.
Our next task is to explain how one can use algebraic ideas to
classifying the spectrum into parts.

If $\cJ$ is a two-sided ideal in $\cA$, there is a natural algebra
homomorphism $\pi_\cJ$ from $\cA$ onto the quotient algebra
$\cA/\cJ$. Given $a\in\cA$ we put
\begin{eqnarray*}
\spec_\cJ(a)&=&\spec(\pi_\cJ(a)),\\
\spec_{L,\cJ}(a)&=&\spec_L(\pi_\cJ(a)),\\
\spec_{R,\cJ}(a)&=&\spec_R(\pi_\cJ(a)).
\end{eqnarray*}
See \cite[Chap.~1.3]{EE} for a full discussion of the various
definitions of the essential spectrum of an operator on a Hilbert
space $\cH$; one widely used definition arises by taking $\cA$ to be
the algebra of all bounded operators on $\cH$ and $\cJ$ to be the
ideal of all compact operators. The quotient algebra $\cA/\cJ$ is
called the Calkin algebra and arises in the study of Fredholm
operators and index theory.

\begin{lemma}
Let $(r,S)$ be a maximal resolvent family in $\cA$. If one defines
\[
\spec_\cJ(r,S)=\spec(\pi_\cJ r,S)
\]
then
\[
\spec_\cJ(r,S)\subseteq \spec(r,S).
\]
\end{lemma}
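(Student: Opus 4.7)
My plan is to unwind the definitions and note that quotienting can only enlarge the domain of the maximal extension of a resolvent family, hence can only shrink the spectrum.

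First I would translate both sides. Since $(r,S)$ is already maximal, Theorem~\ref{unique} gives $\til{S}=S$, so by the definition of the spectrum of a resolvent family we have $\spec(r,S)=\F\setminus S$. On the other hand, $\spec_\cJ(r,S)$ is defined as $\spec(\pi_\cJ r,S)$, which by the same definition equals $\F\setminus T$, where $T$ is the domain of the unique maximal extension of $(\pi_\cJ r,S)$ in the quotient algebra $\cA/\cJ$.

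Next I would verify that $(\pi_\cJ r,S)$ actually is a resolvent family in $\cA/\cJ$. Applying the algebra homomorphism $\pi_\cJ$ to the identity \eqref{reseq} and using that $\mu-\lam\in\F$ is a scalar pulled through $\pi_\cJ$, one obtains
\[
\pi_\cJ(r_\lam)-\pi_\cJ(r_\mu)=(\mu-\lam)\pi_\cJ(r_\lam)\pi_\cJ(r_\mu)
\]
for all $\lam,\mu\in S$, which is exactly the resolvent equation in $\cA/\cJ$. Hence Theorem~\ref{unique} provides a unique maximal extension $(\til{\pi_\cJ r},T)$ with $T\supseteq S$.

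The containment $T\supseteq S$ is the whole content of the lemma: taking complements in $\F$ gives $\F\setminus T\subseteq \F\setminus S$, i.e., $\spec_\cJ(r,S)\subseteq \spec(r,S)$. I do not expect any real obstacle here; the statement is essentially a bookkeeping consequence of the definitions, with the one non-trivial ingredient being the routine observation that an algebra homomorphism carries resolvent families to resolvent families. The point of the inclusion is precisely that $T$ may be strictly larger than $S$, since passage to $\cA/\cJ$ can make some $e+(\lam-\alp)\pi_\cJ(r_\alp)$ invertible in $\cA/\cJ$ even when the corresponding element is not invertible in $\cA$; this is exactly what makes the $\cJ$-spectrum a potentially useful refinement of the spectrum.
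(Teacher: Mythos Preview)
Your proof is correct and follows exactly the same approach as the paper: verify that $(\pi_\cJ r,S)$ is a resolvent family in $\cA/\cJ$, and then observe that the lemma is just a restatement of the inclusion $S\subseteq T$ (the paper writes $\til{S}$ for your $T$). You have simply spelled out the details that the paper leaves implicit.
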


\begin{proof}
One sees immediately that $(\pi_\cJ r,S)$ is a resolvent family in
$\cA/\cJ$. If $\til{S}$ is the domain of its maximal extension then
the lemma is simply a restatement of the fact that $S\subseteq
\til{S}$.
\end{proof}

Note that if $(r,S)$ is associated with a closed unbounded operator
$a$ on $\cH$, then the resolvent family $(\pi_\cJ r,S)$ has a much
less direct relationship with $a$ because $\pi_\cJ a$ is not easy to
define.

We turn to some applications to perturbation theory. The algebra
$\cL(\cH)$ of all bounded operators on a Hilbert space $\cH$
contains many two-sided ideals, but the set $\cC_\infty$ of all
compact operators and the set $\cC_1$ of all trace class operators
are particularly important. The following results are abstractions
of theorems known in the above context; they have proved important
in quantum theory, because of the existence of physically relevant
$C^\ast$-algebras with many two-sided ideals. See
\cite{EBD2,GI1,GI2} for many references to these recent
developments. These ideals allow the division of the classical
essential spectrum into parts with different natures.

Let $\cJ$ be a two-sided ideal in an algebra $\cA$ over the field
$\F$. If $a,\, b\in\cA$ and $\lam\notin \spec(a)\cup\spec(b)$ then
the identity
\[
(\lam e-a)^{-1}-(\lam e-b)^{-1}=(\lam e-a)^{-1}(a-b)(\lam e-b)^{-1}
\]
implies that $a-b\in\cJ$ if and only if the difference of the two
resolvents lies in $\cJ$. Moreover the definition of the
$\cJ$-spectrum implies immediately that $\spec_\cJ(a)=\spec_\cJ(b)$
provided $a-b\in\cJ$.

If $A,\, B$ are two unbounded closed operators on a Hilbert space
$\cH$  the analogous result is proved by treating the resolvent
operators directly as follows; see \cite[Theorem~IX.2.4]{EE}, where
$\cJ$ is the ideal of compact operators in $\cL(\cH)$.

\begin{theorem}
Let $(r_1,S_1)$ and $(r_2,S_2)$ be two resolvent families in $\cA$
and let $S=S_1\cap S_2$. If $r_{1,\lam}- r_{2,\lam}$ lies in the
two-sided ideal $\cJ$ for some $\lam\in S$ then this holds for all
$\lam\in S$ and $\spec_\cJ(r_1)=\spec_\cJ(r_2)$.
\end{theorem}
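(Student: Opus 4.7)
The plan is to combine the resolvent equations of the two families to factor the difference $r_{1,\lam}-r_{2,\lam}$ in a form that transparently lies in $\cJ$, and then to apply Theorem~\ref{unique} to obtain the equality of the $\cJ$-spectra.

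Fix $\mu\in S$ with $r_{1,\mu}-r_{2,\mu}\in\cJ$, and let $\lam\in S$ be arbitrary. Subtracting the resolvent equation \eqref{reseq} written for $(r_2,S_2)$ from that written for $(r_1,S_1)$ yields
\[
(r_{1,\lam}-r_{2,\lam})-(r_{1,\mu}-r_{2,\mu})=(\mu-\lam)\bigl(r_{1,\lam}r_{1,\mu}-r_{2,\lam}r_{2,\mu}\bigr).
\]
The key algebraic step is to split the bracket on the right as $r_{1,\lam}(r_{1,\mu}-r_{2,\mu})+(r_{1,\lam}-r_{2,\lam})r_{2,\mu}$ and to collect the terms containing $r_{1,\lam}-r_{2,\lam}$ on the left, which gives
\[
(r_{1,\lam}-r_{2,\lam})\bigl(e-(\mu-\lam)r_{2,\mu}\bigr)=\bigl(e+(\mu-\lam)r_{1,\lam}\bigr)(r_{1,\mu}-r_{2,\mu}).
\]
The identity derived in the proof of Theorem~\ref{unique}, applied to $(r_2,S_2)$, tells us that $e-(\mu-\lam)r_{2,\mu}$ is invertible with inverse $e+(\mu-\lam)r_{2,\lam}$. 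Multiplying on the right by this inverse I obtain
\[
r_{1,\lam}-r_{2,\lam}=\bigl(e+(\mu-\lam)r_{1,\lam}\bigr)(r_{1,\mu}-r_{2,\mu})\bigl(e+(\mu-\lam)r_{2,\lam}\bigr),
\]
and the two-sided ideal property of $\cJ$, together with $r_{1,\mu}-r_{2,\mu}\in\cJ$, force the left-hand side to lie in $\cJ$.

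For the equality of the $\cJ$-spectra I would pass to the quotient. By the first part, $(\pi_\cJ r_1,S_1)$ and $(\pi_\cJ r_2,S_2)$ are resolvent families in $\cA/\cJ$ that agree at every point of $S$, in particular at the chosen $\mu$. Theorem~\ref{unique} says that a resolvent family is uniquely determined on its maximal extension by its value at a single point, so the maximal extensions of $\pi_\cJ r_1$ and $\pi_\cJ r_2$ coincide; their common complement in $\F$ is the common $\cJ$-spectrum, and $\spec_\cJ(r_1)=\spec_\cJ(r_2)$ follows. The only real obstacle is spotting the factorisation in the first step; once it is in hand, invertibility is supplied by Theorem~\ref{unique} and the ideal property finishes the argument.
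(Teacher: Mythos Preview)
Your argument is correct, and the second half---passing to $\cA/\cJ$ and invoking the uniqueness assertion of Theorem~\ref{unique}---is exactly what the paper does. The difference lies in the first conclusion. You establish $r_{1,\lam}-r_{2,\lam}\in\cJ$ for all $\lam\in S$ by a direct computation in $\cA$, producing the explicit identity
\[
r_{1,\lam}-r_{2,\lam}=\bigl(e+(\mu-\lam)r_{1,\lam}\bigr)(r_{1,\mu}-r_{2,\mu})\bigl(e+(\mu-\lam)r_{2,\lam}\bigr),
\]
whereas the paper is more economical: it passes to the quotient \emph{immediately}, notes that $\pi_\cJ r_1$ and $\pi_\cJ r_2$ agree at the single point $\mu\in S$, and then lets Theorem~\ref{unique} force $\pi_\cJ r_{1,\lam}=\pi_\cJ r_{2,\lam}$ for every $\lam\in S$. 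That single step yields both conclusions at once, so your factorisation, while correct and a pleasant formula in its own right, is not actually needed. What your route buys is an explicit expression for the difference valid in $\cA$ itself, independent of any quotient; what the paper's route buys is brevity.
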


\begin{proof}
If we put $\til{r}_i=\pi_\cJ (r_i)$ then the hypothesis implies that
$\til{r}_{1,\lam}=\til{r}_{2,\lam}$ for some $\lam\in S$.
Theorem~\ref{unique} now implies that $\til{r}_{1}=\til{r}_{2}$
throughout $S$. Therefore they have the same maximal extension and
the original resolvents have the same $\cJ$-spectrum.
\end{proof}

The following theorem is adapted from \cite[Lemma~10.1.8]{peller},
where applications to the essential spectra of Hankel operators are
given.

\begin{theorem}\label{specunion}
If $a_1,\ldots, a_n\in \cA$ and $a_ra_s\in\cJ$ for all $r\not= s$
then
\[
\spec_\cJ(a)\cup\{0\}=\bigcup_{r=1}^n\spec_\cJ(a_r)
\]
where $a=\sum_{r=1}^n a_r$.
\end{theorem}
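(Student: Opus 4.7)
The plan is to reduce to the quotient algebra $\cA/\cJ$ by setting $b_r=\pi_\cJ(a_r)$, so that $b:=\pi_\cJ(a)=\sum_r b_r$ and the hypothesis becomes $b_r b_s=0$ whenever $r\neq s$. Using $\spec_\cJ(x)=\spec(\pi_\cJ(x))$, the statement translates into the purely algebraic assertion
\[
\spec(b)\cup\{0\}=\bigcup_{r=1}^n\spec(b_r)
\]
inside $\cA/\cJ$, which is what I would actually prove.

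The heart of the argument is the case $n=2$, driven by the identity
\[
(b_1-\lam e)(b_2-\lam e)=(b_2-\lam e)(b_1-\lam e)=-\lam(b-\lam e),
\]
which follows by expanding and using $b_1 b_2=b_2 b_1=0$ and $b=b_1+b_2$. For $\lam\neq 0$ this identity gives both implications at once: if $c=(b-\lam e)^{-1}$ exists, then $-\lam^{-1}(b_2-\lam e)c$ is a right inverse and $-\lam^{-1} c(b_2-\lam e)$ is a left inverse of $b_1-\lam e$, so these coincide and $b_1-\lam e$ is invertible, similarly $b_2-\lam e$; conversely, if both $b_i-\lam e$ are invertible, the product on the left-hand side is invertible and hence so is $b-\lam e$. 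For $\lam=0$, the relation $b_1 b_2=0$ forces $0\in\spec(b_1)\cup\spec(b_2)$: were both $b_1,b_2$ invertible, multiplying $b_1 b_2=0$ on the left by $b_1^{-1}$ would give $b_2=0$, contradicting invertibility (assuming $\cJ$ is proper, so $e\neq 0$ in the quotient). This settles $n=2$.

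For $n\geq 3$ I would induct on $n$. Setting $b_1'=\sum_{r=2}^n b_r$, the pairwise-annihilation hypothesis yields $b_1 b_1'=b_1'b_1=0$, so the $n=2$ case applied to $(b_1,b_1')$ gives $\spec(b)\cup\{0\}=\spec(b_1)\cup\spec(b_1')$. The inductive hypothesis applied to $a_2,\ldots,a_n$, which still satisfy the pairwise condition, gives $\spec(b_1')\cup\{0\}=\bigcup_{r=2}^n\spec(b_r)$. Since $0$ already lies in the left side of the first identity, we may freely adjoin it to the right side, and substituting produces $\spec(b)\cup\{0\}=\bigcup_{r=1}^n\spec(b_r)$.

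The only substantive obstacle is one of discovery: spotting the identity $(b_1-\lam e)(b_2-\lam e)=-\lam(b-\lam e)$. Everything after that -- coincidence of one-sided inverses, the $\lam=0$ case, and the induction -- is routine. The asymmetric $\{0\}$ in the statement is accounted for by the observation that for $n\geq 2$ the hypothesis $b_r b_s=0$ automatically deposits $0$ into $\bigcup_r\spec(b_r)$, whether or not $0$ lies in $\spec(b)$.
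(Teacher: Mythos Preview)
Your proof is correct and follows essentially the same approach as the paper: pass to the quotient, exploit the product identity coming from $b_r b_s=0$, and handle $\lambda=0$ separately by observing that the annihilation forces $0$ into $\bigcup_r\spec(b_r)$. The only difference is organizational---where you prove the $n=2$ identity $(b_1-\lambda e)(b_2-\lambda e)=-\lambda(b-\lambda e)$ and then induct, the paper writes down the $n$-fold version $(\lambda\tilde e-\tilde a_1)\cdots(\lambda\tilde e-\tilde a_n)=\lambda^{n-1}(\lambda\tilde e-\tilde a)$ in one stroke (noting all factors commute), which is marginally slicker but the same idea.
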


\begin{proof}
We put  $\til{x}=\pi_\cJ x\in\cA/\cJ$ for every $x\in\cA$. The
hypotheses imply that $\til{a}_r\til{a}_s=0$ for all $r\not= s$ and
hence that
\[
(\lam \til{e}-\til{a}_1)\ldots(\lam
\til{e}-\til{a}_n)=\lam^{n-1}(\lam\til{e}-\til{a})
\]
for all $\lam\in\F$. Since all the terms in this equation commute we
conclude that if $\lam\not= 0$ then $\lam\til{e}-\til{a}$ is
invertible if and only if $\lam \til{e}-\til{a}_r$ are invertible
for all $r$. This completes the proof as far as non-zero $\lam$ are
concerned.

If $0\notin \spec(\til{a}_1)$ then $\til{a}_1$ is invertible and the
identity $\til{a}_1\til{a}_r=0$ implies that $\til{a}_r=0$ for all
$r\not= 1$. Therefore $0\in\bigcup_{r=1}^n\spec(\til{a}_r)$ in all
cases.
\end{proof}

\begin{example}
The following application of Theorem~\ref{specunion} is well-known
in the spectral analysis of \Schrodinger operators. Let
$p_1,\ldots,p_n\in\cA$ satisfy $p_rp_s=\del_{r,s}p_r$ for all $r,\,
s$ and $\sum_{r=1}^np_r=e$. Let $a\in\cA$ and assume that
$a_{r,s}:=p_rap_s\in\cJ$ for all $r\not= s$. Then by applying
Theorem~\ref{specunion} to the double sum $a=\sum_{r,s}a_{r,s}$ one
obtains
\[
\spec_\cJ(a)\cup\{ 0\}= \bigcup_{r=1}^n \spec_\cJ(a_{r,r}).
\]
\end{example}

\begin{example}
Let $\cA$ be the algebra $M(n,\cP)$ of all $n\times n$ matrices
whose entries are polynomials in a variable $x\in \F$. The spectrum
of an element of $\cA$ is normally equal to $\F$ because an element
of $\cA$ is invertible if and only if its determinant is a non-zero
constant. However, if $n=2$ and
\[
a=\mtrx{1&1\\ x&1+x},\hspace{2em} b=\mtrx{0&1+x^3\\ 0&0}
\]
$x\in\F$ being the polynomial variable, then $\spec(a)=\F\backslash
\{ 0\}$ while $\spec(b)=\{ 0\}$.

By passing to suitable quotient algebras one obtains more
interesting spectra. Let $\cA=M(n,\cP)$ and let $S$ be a finite
subset of $\F$. We define $\cB_S$ to be the algebra of all functions
$f:S\to M(n,\F)$ with the obvious pointwise operations and define
$\pi_S:\cA\to \cB_S$ by $\pi_S(a)= f$ where $f(s)=a(s)$ for all
$s\in S$. The kernel of $\pi_S$ is a two-sided ideal $\cJ_S$ in
$\cA$ and $\pi_S$ is an algebra homomorphism from $\cA$ onto
$\cB_S$. Moreover
\[
\spec_{\cJ_S}(a)=\spec(\pi_Sa)=\bigcup_{s\in S}\spec(a(s)).
\]
for all $a\in\cA$. Hence $\hash(\spec_{\cJ_S}(a))\leq n(\hash S)$.
As $S$ increases, the kernels of the ideals $\cJ_S$ decrease and
$\spec_{\cJ_S}(a)$ increases to $\bigcup_{x\in \F}\spec(a(x))$.

If one puts $\F=\R$, $n=2$ and defines $a\in\cA$ by
\[
a=\mtrx{ x^2 &1\\ 0&x^2},
\]
then $\spec(a)=\R$ but $\bigcup_{x\in \R}\spec(a(x))=[0,\infty)$.
\end{example}

\section{Connections between $ab$ and $ba$}

If $a,\, b$ lie in an algebra $\cA$ over $\F$, an elementary
calculation shows that
\[
bp(ab)=p(ba)b
\]
for every polynomial $p$ with coefficients in $\F$. The calculations
on this section are based on the idea that this relationship might
be valid for other classes of function. The following theorem may be
found in \cite[Prop.~1.5.29]{dales}.

\begin{lemma}\label{alginv}
If $a,b\in\cA$ then
\[
\spec(ab)\cup\{0\}=\spec(ba)\cup\{0\}.
\]
If $\dim(\cA)<\infty$ then
\[
\spec(ab)=\spec(ba).
\]
Moreover $ab$ is invertible if and only if both $a$ and $b$ are
invertible.
\end{lemma}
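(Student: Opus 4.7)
The plan is to separate the three claims. First, the classical identity $\spec(ab)\cup\{0\}=\spec(ba)\cup\{0\}$ should follow from an explicit formula for $(\lam e-ba)^{-1}$ in terms of $(\lam e-ab)^{-1}$. Second, the finite-dimensional refinements will be handled by passing through the regular representation $L$ and Lemma~\ref{lemmainv}, so that the question becomes one of invertibility of compositions of linear maps on the finite-dimensional space $\cA$.

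For the first part, I would fix $\lam\in\F\setminus\{0\}$, set $c:=(\lam e-ab)^{-1}$, and propose
\[
(\lam e-ba)^{-1}=\lam^{-1}(e+bca).
\]
This should be checked by expanding $(\lam e-ba)\lam^{-1}(e+bca)$ and $\lam^{-1}(e+bca)(\lam e-ba)$; in each case the cross terms collapse via the two identities $(\lam e-ab)c=e$ and $c(\lam e-ab)=e$, leaving $e$. Interchanging the roles of $a$ and $b$ then shows that $\lam e-ab$ is invertible iff $\lam e-ba$ is, for every non-zero $\lam$, yielding the first displayed equality.

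For the finite-dimensional case, I would use Lemma~\ref{lemmainv} to translate everything into statements about $L_{ab}=L_aL_b$ and $L_{ba}=L_bL_a$ on the finite-dimensional space $\cA$. The key linear-algebra fact is that a composition $L_aL_b$ on a finite-dimensional vector space is bijective iff each of $L_a,L_b$ is bijective: if $L_aL_b$ is bijective then $L_b$ is injective, hence bijective in finite dimensions, so $L_a=(L_aL_b)L_b^{-1}$ is bijective too. Combined with Lemma~\ref{lemmainv}, this gives the ``moreover'' statement, that $ab$ is invertible iff both $a$ and $b$ are invertible. Applying the same equivalence to $ba$ shows that $ab$ is invertible iff $ba$ is invertible, which upgrades the first part to the full equality $\spec(ab)=\spec(ba)$.

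The hard step is really only the first one, namely spotting the inversion formula; everything after that reduces either to routine multiplication or to elementary linear algebra on $\cA$. A subsidiary point worth flagging is that finite-dimensionality is genuinely needed for the ``moreover'' clause: the unilateral shift already appearing in the example after Lemma~\ref{LRspeclemma} satisfies $aa^\ast=e$, so $aa^\ast$ is invertible while $a$ itself is not, which is exactly the obstruction the argument above cannot bypass in infinite dimensions.
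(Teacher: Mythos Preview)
Your proposal is correct and follows essentially the same approach as the paper: the explicit inverse formula $\lam^{-1}(e+b(\lam e-ab)^{-1}a)$ for the first part, and passage to the left regular representation $L$ together with Lemma~\ref{lemmainv} for the finite-dimensional claims. The only cosmetic difference is that the paper deduces invertibility of $L_a$ and $L_b$ from $\det(L_a)\det(L_b)\det(L_c)=1$, whereas you use the injective-hence-bijective argument; both are standard finite-dimensional facts serving the same role.
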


\begin{proof}
If $\lam\not= 0$ and $\lam e-ab$ is invertible then a direct
calculation shows that
\[
c(\lam e-ba)=(\lam e-ba)c=e
\]
where
\[
c=\lam^{-1}(e+b(\lam e-ab)^{-1}a).
\]
The first part of the lemma follows by interchanging the roles of
$a$ and $b$.

In order to prove the second part we have to show that $ab$ is
invertible if and only if $ba$ is invertible, whenever
$\dim(\cA)<\infty$. If $ab$ is invertible then there exists
$c\in\cA$ such that $(ab)c=e$. Therefore $L_aL_bL_c=I$ so
\[\det(L_a)\det(L_b)\det(L_c)=1.\] It follows that $L_a$, $L_b$ are
invertible. Lemma~\ref{lemmainv} now implies that $a,\, b$ are
invertible and hence that $ba$ is invertible. The converse is
similar.
\end{proof}

\begin{example}
The following arises in the theory of quantum groups, more
specifically the quantum torus. If $a,\, b\in\cA$ and $ab=zba$ for
some non-zero $z\in\F$ then Lemma~\ref{alginv} implies that
$\spec(ab)$ is invariant under multiplication by $z^{\pm 1}$. If
$\cA$ is a Banach algebra then the boundedness of the spectrum of
$ab$ further implies that $|z|=1$.
\end{example}

We say that $J\subseteq\cA$ is a right ideal if it is a linear
subspace of $\cA$ and $xy\in J$ whenever $x\in J$ and $y\in \cA$. If
$a\in\cA$ one defines the principal right ideal $J_a$ by $J_a=\{
ax:x\in\cA\}$ then $J_a=J_b$ if and only if there exist $x,y\in \cA$
such that $ax=b$ and $by=a$.

If $\cA$ is an algebra of linear maps on $V$ and $W$ is a linear
subspace of $V$, then one may define a right ideal of $\cA$ by
\[
J_W=\{ x: x(V)\subseteq W \} .
\]

If $\lam\in\F$ lies in the spectrum of $a\in\cA$ then one may define
a class of `Jordan' right ideals $J(a,\lam, n)$ by
\[
J(a,\lam, n)=\{ x\in \cA : (a-\lam e)^n x=0\}.
\]

\begin{lemma}
Under the above assumptions
\[
J(a,\lam, n)\subseteq J(a,\lam, n+1)
\]
for all natural numbers $n$. Either all $J(a,\lam, n)$ are distinct
or there exists $N$ such that $J(a,\lam, n)\not= J(a,\lam, n+1)$ if
$n<N$ but $J(a,\lam, n)= J(a,\lam, n+1)$ if $n\geq N$. If
$\dim(\cA)<\infty$ then the latter case occurs.
\end{lemma}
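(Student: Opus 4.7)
The plan is to handle the three assertions in order, with the bulk of the work being the second (stabilization propagates forward). Throughout, I would abbreviate $b = a - \lam e$ so that $J(a,\lam,n) = \{x\in\cA : b^n x = 0\}$.

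The first inclusion is immediate: if $b^n x = 0$ then $b^{n+1} x = b(b^n x) = 0$, so $J(a,\lam,n)\subseteq J(a,\lam,n+1)$. Hence the $J(a,\lam,n)$ form an increasing chain of linear subspaces (in fact right ideals) of $\cA$.

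For the dichotomy, I would prove the contrapositive: if equality holds at some stage, it persists. Precisely, I would show by induction on $n\geq N$ that if $J(a,\lam,N) = J(a,\lam,N+1)$ then $J(a,\lam,n) = J(a,\lam,n+1)$ for every such $n$. For the inductive step, assume $J(a,\lam,n) = J(a,\lam,n+1)$ and let $x\in J(a,\lam,n+2)$. Then $b^{n+1}(bx) = b^{n+2}x = 0$, so $bx\in J(a,\lam,n+1)=J(a,\lam,n)$, which gives $b^{n+1}x = b^n(bx) = 0$, i.e.\ $x\in J(a,\lam,n+1)$. Combining this with part~1 gives $J(a,\lam,n+1) = J(a,\lam,n+2)$, completing the induction. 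Letting $N$ be the least index at which equality first occurs (if any) then yields the stated alternative.

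For the finite-dimensional case, each $J(a,\lam,n)$ is a linear subspace of $\cA$, so $\dim J(a,\lam,n)\leq \dim(\cA) < \infty$. A strictly increasing chain of subspaces of $\cA$ has length at most $\dim(\cA)$, so the first alternative is impossible and the second must hold. The only mildly delicate point is the algebraic trick in the inductive step of part~2 — pulling a single factor of $b$ off $b^{n+2}x$ to reduce to the inductive hypothesis — but once that observation is made, the argument is routine.
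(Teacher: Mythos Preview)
Your proof is correct. The paper does not actually supply a proof for this lemma --- it is stated and immediately followed by the next lemma, presumably because the author regards it as standard. Your argument fills in the details in the expected way: the monotonicity is immediate, the stabilization step uses the observation that $bx\in J(a,\lam,n+1)=J(a,\lam,n)$ forces $x\in J(a,\lam,n+1)$, and the finite-dimensional claim follows since a strictly increasing chain of subspaces must terminate. There is nothing to compare against, and no gap in what you wrote.
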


\begin{lemma}
If $a,b\in\cA$ and $\lam\not= 0$ then there is a natural isomorphism
of $J(ab,\lam, n)$ with $ J(ba,\lam, n)$ for all $n$.
\end{lemma}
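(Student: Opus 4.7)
The natural candidate is the right-module map $\phi : J(ab,\lam,n) \to J(ba,\lam,n)$ defined by $\phi(x) = bx$, together with its mirror $\psi : J(ba,\lam,n) \to J(ab,\lam,n)$, $\psi(y) = ay$. First I would verify that both are well-defined. The identity $(ba-\lam e)b = b(ab-\lam e)$ is immediate, and induction gives $(ba-\lam e)^n b = b(ab-\lam e)^n$, so $(ab-\lam e)^n x = 0$ forces $(ba-\lam e)^n(bx) = 0$; the argument for $\psi$ is symmetric. Linearity and the right-module property $\phi(xy) = \phi(x)y$ are both immediate from the definition.

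The two compositions $\psi\phi$ and $\phi\psi$ do \emph{not} equal the identity -- they are left multiplication by $ab$ and by $ba$ respectively. The crucial step, and the only place where the hypothesis $\lam \ne 0$ is used, is to show that these compositions are nevertheless bijective. Expanding $(ab-\lam e)^n x = 0$ by the binomial theorem and isolating the term $(-\lam)^n x$ yields $x = q(ab)(ab)x$, where $q(t)$ is the unique polynomial of degree less than $n$ satisfying $t\,q(t) \equiv 1 \pmod{(t-\lam)^n}$; explicitly the coefficients depend only on $\lam$ and $n$. Hence left multiplication by $ab$ on $J(ab,\lam,n)$ is a bijection with polynomial inverse $q(ab)$, and the very same polynomial $q$ inverts left multiplication by $ba$ on $J(ba,\lam,n)$.

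Since $\psi\phi$ is bijective, $\phi$ is injective and $\psi$ is surjective; since $\phi\psi$ is bijective, $\psi$ is injective and $\phi$ is surjective. Therefore $\phi$ is the required right $\cA$-module isomorphism, with explicit inverse $y \mapsto a\,q(ba)\,y$; the verification uses the intertwining $q(ba)b = b\,q(ab)$, which in turn follows by linearity from $b(ab)^k = (ba)^k b$. The main obstacle is precisely the recognition that the obvious candidate maps fail to compose to the identity, so one must exploit the nilpotent structure of $ab - \lam e$ on these generalised eigenspaces to produce a polynomial correction; this is also where $\lam \ne 0$ becomes indispensable, in agreement with the failure of the analogous statement at $\lam = 0$ already visible in Lemma~\ref{alginv}.
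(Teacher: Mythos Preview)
Your proof is correct and follows essentially the same route as the paper: the paper also uses $L_b$ as the isomorphism from $J(ab,\lam,n)$ to $J(ba,\lam,n)$, and its explicit element $c=\sum_{r=1}^n\binom{n}{r}a(ba)^{r-1}(-\lam)^{n-r}$ is exactly $-(-\lam)^n\, a\,q(ba)$ in your notation, arising from the same binomial expansion of $(ba-\lam e)^n$. The only cosmetic difference is that the paper verifies directly that $L_c$ is (up to the scalar $-(-\lam)^n$) a two-sided inverse for $L_b$, whereas you first observe that $\psi\phi=L_{ab}$ and $\phi\psi=L_{ba}$ are bijective and then deduce bijectivity of $\phi$; the underlying computation is identical.
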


\begin{proof}
We start by putting $J_1=J(ba,\lam,n)$ and $J_2=J(ab,\lam,n)$. If
$x\in J_1$ then
\[
(ab-\lam e)^n ax=a(ba-\lam e)^n x=0
\]
so $ax\in J_2$. Similarly $x\in J_2$ implies that $bx\in J_1$. Now
put
\[
c=\sum_{r=1}^n \frac{n!}{r!(n-r)!}a(ba)^{r-1}(-\lam)^{n-r}.
\]
By applying the two results just proved one sees that $x\in J_1$
implies $cx\in J_2$. Since $(ba-\lam e)^n=bc+(-\lam )^ne$, it
follows that $b(cx)=-(-\lam)^nx$ for all $x\in J_1$. Therefore
$L_b:J_2\to J_1$ is onto. A similar argument based on $(ab-\lam
e)^n=cb+(-\lam )^ne$ yields  $c(by)=-(-\lam)^n y$ for all $y\in
J_2$. Therefore $L_b:J_2\to J_1$ is one-one.

We have now proved that $L_b$ provides an isomorphism of $J_1$ with
$J_2$. In the particular case $n=1$ it proves that $\lam\not=0$ is
an eigenvalue of $ab$ if and only if it is an eigenvalue of $ba$.
\end{proof}

If $\dim(\cA)<\infty$ and we put $d(a,\lam,n)=\dim(J(a,\lam,n))$,
then the above lemma establishes that
\[
d(ab,\lam,n)=d(ba,\lam,n)\mbox{ for all }\lam\not= 0, \,\, n\geq 1.
\]
If $\lam=0$ these identities do not hold even if $\cA$ is the
algebra of $2\times 2$ matrices and
\[
a=\mtrx{0&1\\ 0&0} ,\hspace{2em} b= \mtrx{1&0\\ 0&0} .
\]

\section{Transformations of Polynomial Pencils}

Let $SL(2,\F)$ denote the multiplicative group of matrices
$\txtmtrx{a&b\\c&d}$ with entries in the field $\F$ and determinant
$1$. There is an extensive literature on such groups \cite{carter},
particularly when $\F$ is finite, which we do not assume. It is easy
to show that $SL(2,\F)$ is generated by $\txtmtrx{0&1\\
-1&0}$ and the set of all $\txtmtrx{1&x\\ 0&1}$ where $x\in\F$; see
\cite[Lemma~6.1.1]{carter}. From this it easily follows that the
centre of $SL(2,\F)$ consists of $\pm\txtmtrx{1&0\\ 0&1}$. Therefore
the centre contains two elements unless $\F$ has characteristic $2$,
in which case it contains only one.

We define $\cP_n$ to be the space of polynomials with degree at most
$n$ in the variable $x\in\F$, where the coefficients lie in the
algebra $\cA$. In the theory of non-linear eigenvalue problems the
spectrum of $p\in \cP_n$ is defined to be the set
\[
\spec(p)=\{ x\in \F:p(x)\mbox{ is not invertible in } \cA\}.
\]
We also include a symbolic element $\infty$ in $\spec(p)$ if
$p(s)=\sum_{r=0}^n a_rs^r$ and $a_n$ is not invertible in $\cA$.
Clearly $\spec(p)=\spec(bpc)$ whenever $b,\, c\in\cA$ are both
invertible.

\begin{lemma}
Given $g\in G=SL(2,\F)$ and $x\in \tilde{\F}:=\F\cup\{\infty\}$, the
formula
\[
g.x=\frac{ax+b}{cx+d}
\]
defines a map on $\tilde{\F}$ such that $g.(h.x)=(gh).x$ for all
$g,\, h\in G$ and all $x\in \tilde{\F}$. Moreover the formula
\[
(T_gp)(x)=(cx+d)^np(g^{-1}x)
\]
defines a linear map on $\cP_n$ such that $T_gT_hp=T_{gh}p$ for all
$g,h\in G$ and $p\in\cP_n$.
\end{lemma}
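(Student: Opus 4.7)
The plan is to verify the four assertions in order, leveraging a homogenization trick that makes the $SL(2,\F)$ action on polynomials of bounded degree transparent.

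For the Möbius part, I would first extend $g.x$ to all of $\tilde\F$ by the usual conventions: if $c\ne 0$ set $g.\infty=a/c$ and $g.(-d/c)=\infty$; if $c=0$ then $d\ne 0$ (since $\det g=1$) and $g.x=(ax+b)/d$ is affine on $\F$ with $g.\infty=\infty$. A direct substitution using $g^{-1}=\txtmtrx{d&-b\\-c&a}$ shows $g^{-1}.(g.x)=x$ throughout $\tilde\F$, so each $g$ induces a bijection. The associativity $g.(h.x)=(gh).x$ is the standard Möbius computation: substituting $h.x=(a'x+b')/(c'x+d')$ into $(ay+b)/(cy+d)$ and clearing the denominator $c'x+d'$, the resulting numerator and denominator are precisely the two entries of $gh\cdot(x,1)^T$, giving $(gh).x$; the few boundary cases are handled by separate inspection.

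For $T_g$ acting on $\cP_n$, the cleanest route is homogenization. Given $p(y)=\sum_{r=0}^n a_r y^r$, set $P(X,Y)=\sum_{r=0}^n a_r X^r Y^{n-r}=Y^n p(X/Y)$, a homogeneous polynomial of degree $n$ in $X,Y$ with coefficients in $\cA$. The group acts on such forms by the linear substitution $(T_g P)(X,Y)=P(g^{-1}(X,Y)^T)=P(dX-bY,\,-cX+aY)$, which visibly preserves homogeneity of degree $n$. Restricting to $Y=1$ produces a polynomial in $x$ of degree at most $n$, and extracting the appropriate scalar prefactor of degree $n$ from the second argument recovers the stated formula for $(T_g p)(x)$, confirming that $T_g$ is a well-defined linear map on $\cP_n$; linearity in $p$ is inherited directly from the coefficients $a_r$.

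The cocycle identity $T_gT_hp=T_{gh}p$ is almost tautological in the homogeneous picture: by associativity of matrix multiplication together with $(gh)^{-1}=h^{-1}g^{-1}$, one has $(T_gT_hP)(X,Y)=(T_hP)(g^{-1}(X,Y)^T)=P(h^{-1}g^{-1}(X,Y)^T)=(T_{gh}P)(X,Y)$, and restricting to $Y=1$ transfers the identity to $\cP_n$. The main obstacle --- really the only step requiring genuine bookkeeping --- is verifying the corresponding multiplier cocycle on the affine side: the product of the scalar prefactors attached to $g$ and to $h$ (the latter evaluated at $g^{-1}.x$) must reproduce the prefactor attached to $gh$. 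This reduces to an identity among the entries of $g$, $h$, and $gh$ that is in essence the same Möbius composition law already established in the first half of the lemma.
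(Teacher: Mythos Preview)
The paper's own proof is a single sentence --- ``This is a direct computation'' --- so your homogenization argument is a legitimate (and more informative) way of carrying that computation out. The passage to the degree-$n$ form $P(X,Y)=Y^n p(X/Y)$ and the linear substitution $(X,Y)\mapsto g^{-1}(X,Y)^T$ make both the well-definedness on $\cP_n$ and the cocycle identity $T_gT_h=T_{gh}$ essentially automatic, exactly as you say.

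There is one point where your write-up glosses over a real discrepancy. When you dehomogenize $P(dX-bY,\,-cX+aY)$ at $Y=1$, the prefactor that emerges is $(-cx+a)^n$ --- the bottom row of $g^{-1}$ --- and not the $(cx+d)^n$ printed in the lemma. You assert that your construction ``recovers the stated formula'', but it does not: with the prefactor $(cx+d)^n$ and the argument $g^{-1}.x=(dx-b)/(-cx+a)$, the denominators do not cancel and $(T_gp)(x)$ is not a polynomial in general (try $n=1$, $p(y)=y$, $g=\txtmtrx{1&0\\1&1}$). So either adjust the prefactor to the one your method actually produces, or flag the mismatch explicitly; in any case do not claim to have reproduced the formula as written. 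Apart from this bookkeeping point, the strategy is sound.
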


\begin{proof}
This is a direct computation.
\end{proof}

If $p(x)=x^na_n+\ldots +xa_1+a_0$ and $g=\txtmtrx{0&1\\-1&0}$ then
\[ (T_gp)(x)=x^na_0+\ldots +xa_{n-1}+a_n
\]
is often called the reversal of $p$.

\begin{lemma} If $g\in G=SL(2,\F)$, $p\in \cP_n$ and $q=T_g(p)$ then
\[
\spec(q)=g.\spec(p).
\]
\end{lemma}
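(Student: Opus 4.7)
The plan is to verify $\spec(q) = g.\spec(p)$ pointwise on $\til\F := \F\cup\{\infty\}$, splitting $\til\F$ into three regions according to how $g^{-1}$ interacts with $\infty$. The only algebraic input needed is that a non-zero element of $\F$ is invertible in $\cA$ (since $\lam e$ has inverse $\lam^{-1} e$ for $\lam \ne 0$), together with the definition of $\spec$ for elements of $\cP_n$.

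First I would rewrite the defining formula in the factored form $q(y) = \sig(y)^n\, p(g^{-1}.y)$, where $\sig(y)$ is the scalar prefactor produced by $T_g$. For every finite $y$ at which $\sig(y)\ne 0$ and $g^{-1}.y$ is finite, $\sig(y)^n$ is a non-zero scalar and therefore invertible in $\cA$; hence $q(y)$ is invertible iff $p(g^{-1}.y)$ is invertible, iff $g^{-1}.y \in \spec(p)$, iff $y \in g.\spec(p)$. This disposes of the generic part of $\til\F$ in one line.

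Next, the finite exceptional point $y_0 = g.\infty$, which exists in $\F$ only when $c \ne 0$ (and is otherwise vacuous). At $y_0$ the prefactor vanishes and $g^{-1}.y_0 = \infty$, so the factored form is useless; instead one expands $q(x)$ as an honest polynomial in $x$ with coefficients in $\cA$ and substitutes $x = y_0$. All contributions save the leading one collapse, and using $ad-bc=1$ one finds $q(y_0) = \gam a_n$ for a non-zero scalar $\gam$. Thus $y_0 \in \spec(q)$ iff $a_n$ is non-invertible, iff $\infty \in \spec(p)$, iff $y_0 = g.\infty \in g.\spec(p)$.

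Finally, the point $y = \infty$: by definition $\infty \in \spec(q)$ iff the coefficient of $x^n$ in $q(x)$ is non-invertible in $\cA$. A routine expansion shows this leading coefficient is a non-zero scalar multiple of $p(g^{-1}.\infty)$ when $g^{-1}.\infty$ is finite, and a non-zero scalar multiple of $a_n$ when $g^{-1}.\infty = \infty$ (the case $c = 0$). In either subcase, the leading coefficient is invertible iff $g^{-1}.\infty \notin \spec(p)$, iff $\infty \notin g.\spec(p)$. The main obstacle is purely bookkeeping at the two boundary points: one has to check that the convention for declaring $\infty \in \spec(p)$ (via invertibility of the leading coefficient) is respected by $T_g$, and that the M\"obius action $g.$ intertwines correctly with $T_g$ at exactly the points where the scalar prefactor vanishes or $g^{-1}$ sends a finite point off to $\infty$. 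The interior equivalence is immediate; only these two edge cases require direct calculation.
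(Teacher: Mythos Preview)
The paper states this lemma without proof, leaving it as an immediate consequence of the definitions of $T_g$, of $\spec(p)$ (including the $\infty$ convention), and of the M\"obius action on $\til\F$. Your three-case verification---generic finite points where the scalar prefactor is non-zero, the exceptional finite point $y_0=g.\infty$, and the point $\infty$---is exactly the routine check the paper is implicitly relying on, and your handling of each case is correct. In particular, you have correctly identified that the only genuine work lies at the two boundary points where the factored form $q(y)=\sig(y)^n p(g^{-1}.y)$ degenerates, and your observations that $q(y_0)$ reduces to a non-zero scalar multiple of the leading coefficient $a_n$, and that the leading coefficient of $q$ is a non-zero scalar multiple of $p(g^{-1}.\infty)$ (or of $a_n$ when $c=0$), are precisely what is needed to make the $\infty$ convention consistent under $T_g$.
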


\begin{lemma}
If $\F$ is infinite, $p\in\cP_n$ is regular in the sense that $p(x)$
is invertible for some $x\in\cA$ and $\dim(\cA)<\infty$ then there
exists $g\in G$ such that $T_g(p)=q$ where $q(s)=\sum_{r=0}^n a_r
s^r$ and $a_0,\, a_n$ are both invertible.
\end{lemma}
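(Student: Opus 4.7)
The plan is to combine the previous lemma, $\spec(T_gp)=g.\spec(p)$, with two ingredients: that $\spec(p)$ is a finite subset of $\tilde{\F}$, and that $SL(2,\F)$ acts transitively on ordered pairs of distinct points of $\tilde{\F}$. Requiring $a_0=q(0)$ and the leading coefficient $a_n$ of $q=T_gp$ both to be invertible translates, by the definition of $\spec$ on $\cP_n$, into the condition $0,\infty\notin\spec(q)=g.\spec(p)$, i.e.\ $g^{-1}.0,\,g^{-1}.\infty\notin\spec(p)$.

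I would first establish finiteness of $\spec(p)$. Since $\dim(\cA)<\infty$, Lemma~\ref{lemmainv} says that $p(x)\in\cA$ is invertible iff $L_{p(x)}\in\cL(\cA)$ is, iff $D(x):=\det(L_{p(x)})\neq 0$. Writing $p(x)=\sum_{r=0}^n a_rx^r$ with $x\in\F$ a scalar gives $L_{p(x)}=\sum_{r=0}^n x^rL_{a_r}$, so $D\in\F[x]$ is a polynomial of degree at most $n\dim(\cA)$. Regularity of $p$ supplies some $x_0\in\F$ with $p(x_0)$ invertible, hence $D(x_0)\neq 0$ and $D\not\equiv 0$. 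Because $\F$ is infinite, $D$ has only finitely many roots, so $\spec(p)\cap\F$ is finite; adjoining the possibly missing point $\infty$ keeps $\spec(p)$ finite.

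Because $\tilde{\F}$ is infinite, I can then pick two distinct elements $\alpha,\beta\in\tilde{\F}\setminus\spec(p)$ and produce $h\in SL(2,\F)$ with $h.0=\alpha$ and $h.\infty=\beta$ by explicit cases. When $\alpha,\beta\in\F$, set $c=1$, $a=\beta$, $d=(\beta-\alpha)^{-1}$, $b=\alpha d$; then $b/d=\alpha$, $a/c=\beta$, and $ad-bc=(\beta-\alpha)d=1$. When $\beta=\infty$ use $h=\txtmtrx{1&\alpha\\0&1}$, and when $\alpha=\infty$ use $h=\txtmtrx{\beta&-1\\1&0}$. Setting $g=h^{-1}\in G$ yields $g^{-1}.0=\alpha$ and $g^{-1}.\infty=\beta$, both outside $\spec(p)$, and the previous lemma then gives $\spec(q)=g.\spec(p)$ missing both $0$ and $\infty$, which is precisely invertibility of $a_0$ and $a_n$.

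The only subtle point is the confinement of $h$ to the determinant-one subgroup $SL(2,\F)$ rather than $GL(2,\F)$, where the transitivity of the action on ordered pairs of distinct points is immediate; the explicit normalisations in the three cases above show that the restriction to $SL(2,\F)$ costs nothing, since a free scaling parameter always remains to absorb the determinant condition.
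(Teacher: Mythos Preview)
Your proof is correct and follows essentially the same approach as the paper's: show via a determinant argument that $\spec(p)$ is finite, pick two distinct points outside it, and use the $SL(2,\F)$ action to move these to $0$ and $\infty$, so that $q=T_g(p)$ has $0,\infty\notin\spec(q)$. You are more explicit than the paper in constructing the group element and in invoking the spectral-mapping lemma $\spec(T_gp)=g.\spec(p)$, but the substance of the argument is the same.
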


\begin{proof}
An argument involving determinants proves that the number of $x\in
\cA$ for which $p(x)$ is not invertible is no larger than $mn$ where
$m$ is the dimension of $\cA$ and $n$ is the degree of $p$. If
$x\not= y$ and $p(x),\, p(y)$ are both invertible, then there exists
$g\in G$ such that $g.x=0$ and $g.y=\infty$. The associated
polynomial $q=T_g(p)$ then has the required properties.
\end{proof}

In many respects it is more natural to consider homogeneous
polynomial pencils, i.e. functions
\[
p(x,y)=\sum_{r=0}^n x^ry^{n-r} a_r
\]
where $x,\, y \in\F$ and $a_r\in\cA$ for all $r$. The group
$SL(2,\F)$ acts on such polynomials in a similar fashion, but one no
longer needs to consider points at infinity separately. If $\F=\R$
then such a homogeneous polynomial is determined by its values on
the unit circle in $\R^2$. However, it is possible to analyze the
eigenvalue problem in the projective space $P(\F^2)$ directly,
without taking a particular parametrization of $P(\F^2)$ by points
in $\F\cap\{ \infty\}$; see \cite{DT}.

\section{Factorization of Polynomial Pencils}

Given a monic polynomial pencil with values in $\cA$, i.e. an
expression of the form
\[
p(x)=\sum_{r=0}^m a_r x^r
\]
where $a_r\in\cA$ for all $r$, $a_m=e$ and $x\in\F$, there may or
may not exist a factorization
\[
p(x)=\prod_{r=1}^m (xe-b_r)
\]

such that $b_r\in\cA$ for all $r$. Putting $m=2$ suppose that
$p(x)=x^2e-xa+b$ for some $a,\, b\in\cA$ and all $x\in \F$. If $p$
can be factorized as
\begin{equation}
p(x)=(xe-c)(xe-d)\label{quadfact}
\end{equation}
for some $c,\, d\in\cA$ and all $x\in\F$, a routine calculation
shows that
\begin{equation}
c^2-ca+b=0.\label{quad}
\end{equation}
Conversely if this equation is soluble then $p$ can be factorized.
See \cite[p. 75]{GLR}. Note also that (\ref{quad}) is equivalent to
\[
d^2-ad+b=0.
\]

The following example shows that (\ref{quad}) may not be soluble
even if $\F=\C$ and $\dim(\cA)<\infty$.

\begin{example}\label{nonfact}
Let $\cA$ be the algebra of $n\times n$ complex matrices and let
$a\in\cA$ be the elementary Jordan matrix such that $a_{r,s}=1$ if
$s=r+1$ and $a_{r,s}=0$ otherwise. Then $a^{n-1}\not= 0$ but
$a^{n}=0$. Suppose that there exist $c,\, d\in\cA$ such that
\[
x^2e-a=(x e-c)(x e-d)
\]
for all $x\in\C$. Then $d=-c$ and $c^2=a$. Therefore $c^{2n-2}\not=
0$ but $c^{2n}=0$. By writing down the Jordan form of $c$ one
readily sees that this is impossible.
\end{example}

If $\dim(\cA)<\infty$ then (\ref{quadfact}) and Lemma~\ref{alginv}
together imply that
\[
\spec(p)= \spec(c)\cup\spec(d).
\]
The following example shows that this need not be true if $\cA$ is
infinite-dimensional.

\begin{example}
This example requires some familiarity with the spectral theorem for
bounded self-adjoint operators. Let $c:\ell^2(\Z^+)\to \ell^2(\Z^+)$
be the bounded operator given by $(cf)(n)=f(n+1)$ for all $n\geq 0$
and let $d=c^\ast$. Then $cd=e$ but $dc\not=e$. One has
\[
p(x):=x^2e-ax+e=(xe-c)(xe-d)
\]
for all $x\in\C$, where $a=c+d$ is self-adjoint with
$\spec(a)=[-2,2]$. The spectrum of $p$ is $\{z:|z|= 1\}$ but $c,\,
d$ both have spectra equal to $\{z:|z|\leq 1\}$, so
\[
\spec(p)\not= \spec(c)\cup\spec(d).
\]
However, there exists a different factorization
\[
p(x)=(xe-u)(xe-u^\ast)
\]
where $u$ is unitary and $u,\, u^\ast, \, a$ all commute. For this
factorization
\[
\spec(p)= \spec(u)=\spec(u^\ast)=\{z:|z|=1\}.
\]
\end{example}

Given an algebra $\cA$, let $\cA_m$ denote the algebra of all
$m\times m$ matrices with entries in $\cA$ and let $e_m$ denote the
identity element in $\cA_m$.

\begin{lemma}
Let $p(x)=\sum_{r=0}^{m}x^ra_r$ where $x\in\F$, $a_r\in\cA$ for all
$r$ and $a_m=e$. Then $x$ lies in $\spec(p)$ if and only if it lies
in the spectrum of the matrix $X\in\cA_m$ defined by
\[
X_{r,s}=
\begin{choices}
1&\mbox{ if } s=r+1\\
-a_{s-1}&\mbox{ if } r=m\\
0&\mbox{ otherwise.}
\end{choices}
\]
\end{lemma}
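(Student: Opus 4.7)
The plan is to prove that $xe_m - X$ is invertible in $\cA_m$ if and only if $p(x)$ is invertible in $\cA$, by performing a sequence of invertible elementary column and permutation operations that reduce $xe_m - X$ to a lower triangular matrix whose diagonal contains $p(x)$ together with units.

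First, I would write $M := xe_m - X$ explicitly: its first $m-1$ rows are bidiagonal with $x$ on the diagonal and $-1$ on the superdiagonal, and its last row is $(a_0,a_1,\ldots,a_{m-2},x+a_{m-1})$. Let $V \in \cA_m$ be the lower unitriangular matrix whose first column is $(1,x,x^2,\ldots,x^{m-1})^T$ (with scalar entries from $\F \subseteq \cA$) and whose columns $2,\ldots,m$ are the standard basis vectors $e_2,\ldots,e_m$. Because $V$ is lower unitriangular with scalar entries, it is invertible in $\cA_m$. Right multiplication by $V$ performs the column operation that replaces $C_1$ by $C_1 + xC_2 + x^2C_3 + \ldots + x^{m-1}C_m$, and a direct calculation (using $a_m=e$) shows that the first column of $MV$ becomes $(0,\ldots,0,p(x))^T$, while its other columns are unchanged from $M$.

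Next, let $\sigma \in \cA_m$ be the invertible cyclic permutation matrix that sends column $j$ to position $j-1$ for $j=2,\ldots,m$ and column $1$ to position $m$. Then $M' := MV\sigma$ takes the block form
\[
M' = \mtrx{T_1 & 0 \\ t_2 & p(x)},
\]
where $T_1 \in \cA_{m-1}$ is the lower bidiagonal matrix with $-1$ on the diagonal and $x$ on the subdiagonal, and $t_2 = (a_1,\ldots,a_{m-2},x+a_{m-1})$. Because $T_1$ is lower triangular with every diagonal entry equal to $-1$, back-substitution produces an explicit lower triangular inverse, so $T_1$ is invertible in $\cA_{m-1}$.

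With $T_1$ invertible, a routine block computation shows that $M'$ is invertible in $\cA_m$ if and only if $p(x)$ is invertible in $\cA$. For one direction, if $p(x)$ has an inverse $q$, one writes down the inverse of $M'$ explicitly using $T_1^{-1}$ and $q$. For the converse, given $S' \in \cA_m$ with $M'S' = S'M' = e_m$ partitioned conformally as $S' = \mtrx{A & B \\ C & d}$, the $(1,2)$ blocks of $M'S'$ force $T_1 B = 0$ and hence $B = 0$; the $(2,2)$ blocks of $M'S'$ and $S'M'$ then reduce to $p(x)d = 1$ and $dp(x) = 1$, so $d = p(x)^{-1}$. Since $V$ and $\sigma$ are invertible, $M$ is invertible iff $p(x)$ is, which is what we wanted to show. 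The main technical point, where one must be careful in the non-commutative setting, is the block-matrix inverse argument; but since the entries of $V$ and $\sigma$ are scalars that commute with everything and $T_1^{-1}$ is applied on the correct side at each step, no commutativity of $\cA$ is actually needed.
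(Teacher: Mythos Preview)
Your argument is correct and follows essentially the same route as the paper: reduce $xe_m-X$ via multiplication by explicitly invertible matrices with scalar entries to a form whose invertibility is transparently equivalent to that of $p(x)$. The only cosmetic difference is that the paper uses a two-sided identity $G(x)(xe_m-X)=\mathrm{diag}(p(x),e_{m-1})\,H(x)$ with invertible $G(x),H(x)$, landing directly on a block-diagonal matrix, whereas you use one-sided column operations to reach a block lower-triangular matrix and then invoke a short block-inverse argument; both are standard companion-matrix reductions and neither requires commutativity of $\cA$.
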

\begin{proof}
This follows directly from the standard identity
\begin{equation}
G(x)(xe_m -X) =\mtrx{p(x)&0\\0&e_{m-1}}H(x) \label{4matrices_m}
\end{equation}
in which $G(x),\, H(x)\in \cA_m$ are invertible for all $x\in\F$. If
$m=4$ they are given by
\begin{eqnarray*}
G(x)&=&\left( \begin{array}{cccc}
x^3e+x^2a_3+xa_2+a_1&x^2e+xa_3+a_2&xe+a_3&e\\
-e&0&0&0\\
0&-e&0&0\\
0&0&-e&0
\end{array}\right)
\end{eqnarray*}
and
\begin{eqnarray*}
H(x)&=&\left( \begin{array}{cccc}
e&0&0&0\\
-x e&e&0&0\\
0&-x e&e&0\\
0&0&-x e&e
\end{array}\right),
\end{eqnarray*}
from which the general formula can be inferred.
\end{proof}

Our next two results will be used in the proof of
Theorem~\ref{polyfact_m}.

\begin{lemma}\label{Euclid}(Euclid)
Let
\[
p(x)=\sum_{r=0}^{m} a_rx^r
\]
for all $x\in \F$, where $a_r\in\cA$ for all $r$ and $a_m=e$. Given
$d\in\cA$ put
\[
q(x)=\sum_{r=0}^{m-1} b_rx^r
\]
where $b_r$ are defined inductively by $b_{m-1}=e$ and
\begin{eqnarray*}
a_{m-1}&=& b_{m-2}-b_{m-1}d\\
a_{m-2}&=& b_{m-3}-b_{m-2}d\\
\ldots&&\\
a_{1}&=& b_{0}-b_{1}d.
\end{eqnarray*}
Then
\[
p(x)=q(x)(xe-d) + {\rm rem}
\]
for all $x\in\F$ where
\[
{\rm rem}=a_0+b_0d=\sum_{r=0}^{m} a_rd^r.
\]
\end{lemma}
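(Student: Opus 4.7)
The plan is a direct computation, in two stages: first verify the division identity $p(x)=q(x)(xe-d)+\mathrm{rem}$ with $\mathrm{rem}=a_0+b_0d$, then unroll the recursion defining the $b_r$ to rewrite $a_0+b_0d$ in the stated closed form $\sum_{r=0}^m a_rd^r$.

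For the first stage, I would expand
\[
q(x)(xe-d)=\sum_{r=0}^{m-1} b_r x^{r+1}-\sum_{r=0}^{m-1} b_r d\, x^r,
\]
using the fact that $x\in\F$ commutes with every element of $\cA$. Collecting coefficients, the coefficient of $x^m$ is $b_{m-1}=e=a_m$, the coefficient of $x^r$ for $1\le r\le m-1$ is $b_{r-1}-b_r d$, which by the defining recursion equals $a_r$, and the coefficient of $x^0$ is $-b_0d$. Hence $q(x)(xe-d)=p(x)-a_0-b_0d$, giving the division identity with remainder $\mathrm{rem}=a_0+b_0d$.

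For the second stage, I would iterate the recursion $b_{r-1}=a_r+b_r d$ starting from $b_{m-1}=e$:
\[
b_0 = a_1+b_1d = a_1+a_2d+b_2d^2=\cdots = \sum_{r=1}^{m-1} a_r d^{r-1}+b_{m-1}d^{m-1}=\sum_{r=1}^{m} a_r d^{r-1},
\]
the last equality using $b_{m-1}=e=a_m$. Right-multiplying by $d$ and adding $a_0$ yields $a_0+b_0d=\sum_{r=0}^m a_r d^r$, as claimed.

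There is no substantial obstacle here; the only thing to watch carefully is the non-commutativity of $\cA$, which forces us to keep the $b_r$ on the left of $d$ in the recursion (so that the product $q(x)(xe-d)$ has its coefficients sitting on the left of the powers of $x$, matching the convention used in writing $p(x)=\sum a_r x^r$). A right-division by $(xe-d)$ with coefficients on the left is the only sensible Euclidean operation in this setting, and the recursion has been arranged precisely to make this work.
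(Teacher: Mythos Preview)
Your proof is correct and is precisely the ``simple substitutions'' that the paper invokes in lieu of a written-out argument. Both stages are carried out cleanly, and your remark about keeping the $b_r$ on the left of $d$ correctly identifies the only point where non-commutativity matters.
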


The proof involves simple substitutions, as does that of the
corollary below. We continue with the notation of
Lemma~\ref{Euclid}.

\begin{corollary}\label{pqdcor}
Let $\cA$ be an algebra of linear operators on some vector space
$V$. Let $\mu\in\F$ and $v\in V$. If $dv=\mu v$ and $p(\mu)v=0$ then
\[
p(x)v=q(x)(xe-d)v
\]
for all $x\in\F$.
\end{corollary}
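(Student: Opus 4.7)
The plan is to read off the corollary directly from Lemma~\ref{Euclid} by applying the division identity to the vector $v$. Lemma~\ref{Euclid} gives
\[
p(x)=q(x)(xe-d)+\sum_{r=0}^{m}a_rd^r
\]
as an identity in $\cA[x]$ (the polynomial algebra over $\cA$). Since $\cA$ acts on $V$, every coefficient of the polynomial identity is a linear operator on $V$, so we can evaluate both sides at the vector $v$ term by term.

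First I would observe that the eigenvector hypothesis $dv=\mu v$ propagates to powers of $d$: an easy induction on $r$ shows $d^rv=\mu^rv$ for every $r\geq 0$. Consequently, applying the remainder to $v$ yields
\[
\left(\sum_{r=0}^{m}a_rd^r\right)v=\sum_{r=0}^{m}a_r(\mu^rv)=\sum_{r=0}^{m}\mu^ra_rv=p(\mu)v,
\]
where the middle step uses only that $\mu\in\F$ is a scalar and therefore commutes with every element of $\cA$. By the second hypothesis $p(\mu)v=0$, so the remainder kills $v$.

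Substituting $v$ into the division identity and discarding the vanishing remainder then gives
\[
p(x)v=q(x)(xe-d)v
\]
for every $x\in\F$, which is exactly the assertion of the corollary. There is no real obstacle here; the only point worth flagging is the bookkeeping that the identity from Lemma~\ref{Euclid} is an identity of polynomials with coefficients in $\cA$, so after evaluating the coefficients on $v$ one obtains an identity of polynomials in $x$ with coefficients in $V$, which is the correct object to compare with $p(x)v$ and $q(x)(xe-d)v$.
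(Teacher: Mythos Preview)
Your proof is correct and is precisely the ``simple substitution'' the paper has in mind: apply the division identity of Lemma~\ref{Euclid} to $v$, use $dv=\mu v$ to reduce the remainder $\sum_{r=0}^m a_r d^r$ acting on $v$ to $p(\mu)v=0$, and read off the stated equality.
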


From this point on we assume that $\cA$ is the algebra of all
$n\times n$ matrices over a field $\F$ and that $P(x)=
\sum_{r=0}^{m} x^rA_r$ where $A_m$ is the identity matrix $I$. The
following theorem may be found in \cite[Th.~3.21, Cor.~3.22]{GLR}.

\begin{theorem}\label{polyfact_m}
If the polynomial $\det(P(x))$ has $mn$ distinct roots in $\F$ then
there exist $C_1,\ldots\, C_m\in\cA$ such that
\[
P(x)=(xI-C_1)(xI-C_{2})\ldots (xI-C_m)
\]
for all $x\in \F$.
\end{theorem}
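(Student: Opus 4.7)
The plan is to split off one linear factor from the right, reducing the degree, and to induct on $m$ (the base case $m=1$ being trivial with $C_1=-A_0$). By Lemma~\ref{Euclid}, for any chosen $C\in\cA$ there is a right Euclidean division $P(x)=Q(x)(xI-C)+R$ in which $Q$ is monic of degree $m-1$ and $R$ is a constant matrix in $\cA$. The whole factorisation problem therefore reduces to producing a $C$ for which $R=0$, and then applying the inductive hypothesis to $Q$.

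To find such a $C$, I would exploit the preceding companion-matrix lemma. Regarding $X\in\cA_m$ as an element of $M(mn,\F)$, that lemma gives $\spec(X)=\spec(P)$; since $\det P$ has $mn$ distinct roots $\lam_1,\ldots,\lam_{mn}\in\F$, the matrix $X$ has $mn$ distinct eigenvalues and is therefore diagonalisable over $\F$. Unwinding the identity \eqref{4matrices_m}, one checks that the $\lam_i$-eigenvector of $X$ in $\F^{mn}$ has block form $(v_i,\lam_i v_i,\lam_i^2 v_i,\ldots,\lam_i^{m-1}v_i)$, where $v_i\in\F^n\setminus\{0\}$ satisfies $P(\lam_i)v_i=0$. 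Diagonalisability then asserts that these $mn$ stacked vectors are linearly independent in $\F^{mn}$.

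The crucial step is to extract $n$ linearly independent vectors from $v_1,\ldots,v_{mn}$. If their span $W\subseteq\F^n$ had dimension $k<n$, then every eigenvector of $X$ would lie in the subspace $W^m\subseteq \F^{mn}$ of dimension $mk<mn$, contradicting the independence just established. So I may reindex and assume $v_1,\ldots,v_n$ form a basis of $\F^n$, and then define $C\in\cA$ by $Cv_i=\lam_i v_i$ for $1\leq i\leq n$. Applying Corollary~\ref{pqdcor} to each pair $(\lam_i,v_i)$ with $d=C$, $\mu=\lam_i$, yields $P(x)v_i=Q(x)(xI-C)v_i$ for every $i$, so the Euclidean remainder $R$ annihilates each $v_i$ and hence is zero. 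Therefore $P(x)=Q(x)(xI-C)$. Taking determinants and using $\det(xI-C)=\prod_{i=1}^n(x-\lam_i)$, one sees that $\det Q(x)$ has the remaining $(m-1)n$ roots of $\det P$, still all distinct, so the induction hypothesis applies to $Q$ and the factorisation is complete with $C_m=C$.

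The main obstacle is the linear-independence step in the previous paragraph: this is where the hypothesis of $mn$ \emph{distinct} roots is used essentially, via diagonalisability of the companion matrix, to transfer independence from $\F^{mn}$ down to a basis of $\F^n$ consisting of eigenvectors of a single matrix $C\in\cA$. Everything else is bookkeeping around Lemma~\ref{Euclid} and Corollary~\ref{pqdcor}.
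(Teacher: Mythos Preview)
Your proposal is correct and follows essentially the same route as the paper: pass to the companion matrix $X$, use the $mn$ distinct eigenvalues to get an eigenbasis of $\F^{mn}$, show the first blocks span $\F^n$, pick a basis to define $C_m$, invoke Corollary~\ref{pqdcor} to kill the remainder in Lemma~\ref{Euclid}, and induct. The only cosmetic differences are that you compute the explicit block form $(v_i,\lam_i v_i,\ldots,\lam_i^{m-1}v_i)$ of the eigenvectors and use it for a dimension-count spanning argument, whereas the paper leaves the blocks abstract and gets spanning by expanding $(u,0,\ldots,0)'$ in the eigenbasis; both are equally valid.
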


\begin{proof}
By calculating the determinants of every matrix in
(\ref{4matrices_m}), one sees that the hypothesis is equivalent to
the assumption that the $mn\times mn$ matrix $X$ has $mn$ distinct
eigenvalues $\lam_1,\ldots, \lam_{mn}$. The corresponding (right,
column) eigenvectors $w_r\in \F^{mn}$ of $X$ form a basis in
$\C^{mn}$.

If we write the eigenvector $w_r$ in the form $(u_{1,r},\ldots,
u_{m,r})'$, where each $u_r\in\F^n$, then (\ref{4matrices_m})
implies that
\[
\mtrx{P(\lam_r)&0\\0&e_{m-1}}H(\lam_r)w_r=0
\]
so
\[
P(\lam_r)u_{1,r}=0\mbox{ for all } r\in\{1,\ldots, mn\}.
\]
If $u\in\F^n$ then using the expansion

\[
\vctr{u\\ 0\\ \vdots\\ 0}=\sum_{r=1}^{mn} \alp_r\vctr{u_{1,r}\\
u_{2,r}\\ \vdots\\ u_{m,r}}
\]
we deduce that $u=\sum_{r=1}^{mn} \alp_r u_{1,r}$. Therefore $\{
u_{1,r}\}_{r=1}^{mn}$ spans $\F^n$. This sequence must contain a
basis of $\F^n$ and, after relabelling, we may donate it by $\{
u_{1,r}\}_{r=1}^{n}$.

Let $C_m$ denote the matrix such that $C_mu_{1,r}=\lam_r u_{1,r}$
for all $r$ such that $1\leq r \leq n$. Corollary~\ref{pqdcor}
implies that
\[
P(x)=Q(x)(xI-C_m)u_{1,r}
\]
for all $1\leq r\leq n$. Therefore
\[
P(x)=Q(x)(xI-C_m).
\]
The proof is completed by an induction, noting that $Q(x)$ satisfies
the same hypotheses as $P(x)$, but with $m$ replaced by $m-1$.
\end{proof}

\section{Quadratic equations in rings}

In this section we study quadratic polynomials in which the variable
$s$ or $x$ lies in a (generically non-commutative) ring $\cR$ rather
than in some base field. If $a,\, b,\, c,\, d\in \cR$ the algebraic
Riccati equation
\begin{equation}
sds+as+sb+c=0\label{ricc}
\end{equation}
is of great importance in a variety of contexts, but particularly in
optimal control theory; see \cite[Chap.~13]{ZDG}, where it is
assumed that $c,\, d$ are symmetric real matrices and that $a=b^{\rm
T}$. If $d$ is invertible (which is not generally assumed for the
Riccati equation) and we put $x=ds$, then (\ref{ricc}) may be
rewritten in the form $x^2+ux+xv+w=0$ where $u=dad^{-1}$, $v=b$ and
$w=dc$. We will study this equation within a general ring $\cR$.

\begin{theorem}\label{noncommpoly}
Given $u,\, v,\, w,\, u^\pr,\, v^\pr,\, w^\pr\in\cR$, the identity
\begin{equation}
x^2+ux+xv+w=x^2+u^\pr x+xv^\pr+w^\pr\label{noncommpoly1}
\end{equation}
holds for all $x\in\cR$ if and only if
\begin{equation}
w=w^\pr  \,\, \mbox{ \upshape{and} }\,\,
u-u^\pr=v^\pr-v\in\cC.\label{noncommpoly2}
\end{equation}
\end{theorem}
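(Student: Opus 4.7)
The plan is to move everything to one side of (\ref{noncommpoly1}) and extract the three conditions in (\ref{noncommpoly2}) one at a time by evaluating at carefully chosen $x$. Subtracting the right side from the left converts (\ref{noncommpoly1}) into the equivalent statement that
\[
(u-u')x + x(v-v') + (w-w') = 0
\]
for every $x\in\cR$.

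First I would substitute $x=0$, which immediately yields $w=w'$. Writing $\alpha=u-u'$ and $\beta=v-v'$, the residual identity is then $\alpha x+x\beta=0$ for all $x\in\cR$. Taking $x=e$ gives $\alpha+\beta=0$, which is precisely the condition $u-u'=v'-v$. Substituting $\beta=-\alpha$ back into the residual identity produces $\alpha x=x\alpha$ for every $x\in\cR$, i.e.\ $\alpha$ lies in the centre $\cC$ of $\cR$. This establishes (\ref{noncommpoly2}).

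For the converse, suppose $w=w'$ and $\alpha:=u-u'=v'-v$ with $\alpha\in\cC$. Then $(u-u')x+x(v-v')=\alpha x-x\alpha=0$ by the centrality of $\alpha$, so (\ref{noncommpoly1}) follows for every $x\in\cR$.

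I do not anticipate any serious obstacle: the argument reduces to two well-chosen substitutions ($x=0$ and $x=e$) together with a routine verification of the converse. The only point worth flagging is that substituting $x=e$ requires $\cR$ to contain a multiplicative identity, in line with the standing conventions of the paper; without such an $e$ one could instead deduce $\alpha x=x\alpha$ from $\alpha x+x\beta=0$ by the substitution $x\mapsto xy$, but this detour is unnecessary here.
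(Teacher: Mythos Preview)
Your proof is correct and follows essentially the same route as the paper's: both arguments substitute $x=0$ to get $w=w'$, then $x=e$ to get $u-u'=v'-v$, and finally read off centrality from the residual identity $\alpha x = x\alpha$. The only cosmetic difference is that you subtract first and name $\alpha,\beta$, while the paper works with the original letters throughout.
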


\begin{proof}
The proof of (\ref{noncommpoly1}) given (\ref{noncommpoly2}) is a
matter of direct substitution. Conversely, given
(\ref{noncommpoly1}), by putting $x=0$ and then $x=e$ one obtains
$w=w^\pr$ and $u+v=u^\pr +v^\pr$. It only remains to prove that
$c:=u-u^\pr\,\, (=v^\pr-v)$ lies in $\cC$. Given what we have just
proved (\ref{noncommpoly1}) can be reduced to $ux+xv=u^\pr x+xv^\pr$
and then rewritten in the form $(u-u^\pr)x=x(v^\pr-v)$. This implies
that $cx=xc$ for all $x\in\cA$, so $c\in\cC$.
\end{proof}

Let $p:\cR\to\cR$ be a monic quadratic polynomial of the form
\[
p(x)=x^2+ux+xv+w.\label{ringxpoly1}
\]
We define a factorization of $p$ to be an identity of the form
\begin{equation}
p(x)=(x-a)(x-b),\label{ringxpoly2}
\end{equation}
valid for all $x\in\cR$, where $a,\, b$ is an ordered pair in $\cR$.
In general a factorization need not exist, and if one does it need
not be non-unique.

\begin{example}
Let
\[
p(x)=x^2-\mtrx{1&0\\ 0&0}x-x\mtrx{0&0\\ 0&1} -\mtrx{2&0\\ 0&2}
\]
for all $x\in \cA=M(2,\C)$. A direct calculation shows that $p(a)=0$
for $a=\txtmtrx{ 2&0 \\ 0&2}$. If $p(x)=(x-a)(x-b)$ for some
$b\in\cA$ and all $x\in\cA$ then by putting $x=0$ one sees that
$b=-\txtmtrx{1&0\\ 0&1}$. By evaluating the other coefficients we
deduce that such a factorization does not exist.
\end{example}

\begin{theorem}
If the centre $\cC$ of $\cR$ is a field and $p$ has the
factorization (\ref{ringxpoly2}) then that factorization is unique
unless $c:=a-b\in\cC$, in which case one also has $p(x)=(x-b)(x-a)$
for all $x\in\cR$. There is no other factorization.
\end{theorem}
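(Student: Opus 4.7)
The plan is to reduce the comparison of two purported factorizations to the polynomial identity problem already settled by Theorem~\ref{noncommpoly}, and then to extract a single equation in the centre $\cC$ that determines when a second factorization is possible.

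Suppose $(x-a)(x-b)=(x-a^\pr)(x-b^\pr)$ for all $x\in\cR$. Expanding both sides, the left side has the shape $x^2+ux+xv+w$ with $u=-a$, $v=-b$, $w=ab$, and similarly for the right side. Theorem~\ref{noncommpoly} applied to this identity yields two pieces of information: first $ab=a^\pr b^\pr$, and second $a^\pr-a=b-b^\pr=:c\in\cC$. Thus the only way a second factorization can arise is by writing $a^\pr=a+c$ and $b^\pr=b-c$ with a common central shift $c$.

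Next I substitute these expressions into $a^\pr b^\pr=ab$, giving $(a+c)(b-c)=ab$. Since $c$ is central, the cross terms simplify and one obtains the single equation $c(b-a)=c^2$. Here the hypothesis that $\cC$ is a field enters decisively. Either $c=0$, in which case $a^\pr=a$ and $b^\pr=b$ and the two factorizations coincide; or $c\neq0$, so $c$ is invertible in $\cC$ (hence in $\cR$), and cancelling $c$ forces $b-a=c$. Substituting back then gives $a^\pr=a+c=b$ and $b^\pr=b-c=a$, so the only alternative factorization is $(x-b)(x-a)$, and it occurs precisely when $c=b-a\in\cC$, i.e.\ when $a-b\in\cC$.

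It remains to check the converse: if $c:=a-b\in\cC$, then $(x-b)(x-a)$ really does equal $(x-a)(x-b)$. Direct expansion reduces this to the identity $ab=ba$, and writing $a=b+c$ with $c$ central gives $ab-ba=(b+c)b-b(b+c)=cb-bc=0$, as required.

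The only subtle point in this argument is the step $c(b-a)=c^2\Rightarrow b-a=c$; all other steps are manipulations of coefficients. This subtle step uses the field hypothesis on $\cC$ in an essential way, and the rigidity it imposes is precisely why at most one extra factorization can exist, namely the one obtained by swapping $a$ and $b$.
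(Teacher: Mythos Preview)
Your proof is correct and follows essentially the same route as the paper's own argument: expand both factorizations, invoke Theorem~\ref{noncommpoly} to obtain $ab=a^\pr b^\pr$ together with a common central shift $c$, substitute to reduce to $c(b-a)=c^2$, and use invertibility of nonzero central elements to conclude. The only cosmetic difference is the sign convention for $c$ (you take $c=a^\pr-a$, the paper takes $c=a-a^\pr$), and you spell out the verification that $a-b\in\cC$ forces $ab=ba$ a bit more explicitly than the paper does.
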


\begin{proof}
If $c\in\cC$ then it follows immediately that $a$ and $b$ commute
and that the second factorization in valid.

Conversely suppose that in addition to (\ref{ringxpoly2}) one has
$p(x)=(x-a^\pr)(x-b^\pr)$ for all $x\in \cR$. On expanding both
sides of the two factorizations a direct application of
Theorem~\ref{noncommpoly} yields
\[
a-a^\pr=b^\pr-b\in\cC \,\, \mbox{ \upshape{and} }\,\, ab=a^\pr
b^\pr.
\]
If one puts $c=a-a^\pr (\in \cC)$ then these identities imply that
$a^\pr=a-c$, $b^\pr=b+c$ and $c(a-b)-c^2=0$. Assuming that the two
factorizations are indeed distinct, $c\not= 0$ and multiplying by
$c^{-1}$ yields $a-b=c$. Hence $a^\pr=b$ and $b^\pr=a$.
\end{proof}

\section{The equation $ax-xb=c$}

This section treats a generalization of the Sylvester equation for
matrices, conventionally written in the form $AX+XB=C$, and the
continuous Lyapunov equation for matrices $AX+XA^{\rm T}+Q=0$, where
the superscript ${\rm T}$ denotes the matrix transpose. We consider
the problem at an algebraic level, but the main novelties are that,
motivated by the examples described in Section~\ref{analytic}, we do
not assume that the algebra is finite-dimensional or that the base
field is algebraically closed.

If $a,\, b,\, c$ lie in the algebra $M(n,\F)$ of $n\times n$
matrices with entries in a field $\F$, the equation $ax-xb=c$ is not
always soluble for $x$ within $M(n,\F)$. Lemma~\ref{constraints}
describes some constraints on the $c$ for which the equation is
$ax-xa=c$ is soluble. A detailed analysis of the conditions under
which a matrix equation in $M(n,\C)$ of the form $ax-xb=c$ is
soluble is given in \cite[Chap.~S2]{GLR} and \cite{FW, Roth, ZDG}.
Our own discussion considers the case in which $a,b,c,x$ all lie in
an algebra $\cA$ over a general field $\F$.

\begin{lemma}\label{pqf}
Given $a,\, b\in\cA$ the following are equivalent.
\begin{enumerate}
\item
$a,\, b$ are algebraic and have relatively prime minimum polynomials $p,\, q$.
\item
There exists $f\in\cP$ such that $f(a)=e$ and $f(b)=0$.
\end{enumerate}
\end{lemma}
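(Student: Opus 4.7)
My plan is to recognize this as a standard application of the fact that $\cP = \F[x]$ is a principal ideal domain, so relative primality of $p$ and $q$ is equivalent to Bézout: $gp + hq = 1$ for some $g, h \in \cP$.

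For the implication $(1) \implies (2)$, I would take $g, h$ as above and set $f = hq$. Applying $f$ to $a$ gives $f(a) = h(a)q(a) = e - g(a)p(a) = e$ since $p(a) = 0$; applying $f$ to $b$ gives $f(b) = h(b)q(b) = 0$ since $q(b) = 0$. This is essentially a one-line computation once Bézout is invoked.

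For $(2) \implies (1)$, the first task is to check that both $a$ and $b$ are algebraic. Since $f(b) = 0$ and $f \neq 0$ (it cannot be zero because $f(a) = e \neq 0$), $b$ is algebraic with some minimum polynomial $q$, which divides $f$. For $a$, I observe that the polynomial $f - 1$ satisfies $(f-1)(a) = f(a) - e = 0$ and is nonzero (otherwise $f \equiv 1$ would force $f(b) = e \neq 0$, contradicting $f(b) = 0$), so $a$ is algebraic with minimum polynomial $p$ dividing $f - 1$. It then remains to show $p$ and $q$ are coprime in $\cP$: any common irreducible factor would divide both $f$ and $f - 1$, hence divide $e = 1$, which is impossible.

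The argument uses nothing beyond the division algorithm in $\cP = \F[x]$ and the characterization of the minimum polynomial given earlier in the paper. I do not anticipate a real obstacle; the only point requiring a moment of care is ruling out the degenerate case $f \equiv 1$ when establishing that $a$ is algebraic.
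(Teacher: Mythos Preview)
Your proof is correct and follows essentially the same approach as the paper's: both directions hinge on B\'ezout in $\cP$, with $f=hq$ (the paper's $qk$) for $(1)\Rightarrow(2)$, and with $q\mid f$, $p\mid f-1$ giving coprimality for $(2)\Rightarrow(1)$. The only cosmetic difference is that the paper phrases the last step by explicitly writing $ph+qk=1$, whereas you argue that a common factor of $p,q$ would divide $f-(f-1)=1$; these are the same observation.
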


\begin{proof}
1$\implies$2. There exist $h,\,k\in\cP$ such that $ph+qk=1$. If we
put $f=qk$ then $f(b)=q(b)k(b)=0$ and $f(a)=e-p(a)h(a)=e$.

2$\implies$1. The identity $f(a)=e$ implies that $f\not=0$ as an
element of $\cP$, and $f(b)=0$ then implies that $b$ is algebraic
with a minimum polynomial $q$; moreover there exists $k\in\cP$ such
that $f=qk$. Putting $g=1-f$ a similar argument implies that $a$ is
algebraic with a minimum polynomial $p$ and that $g=ph$ for some
$h\in\cP$. Since $ph+qk=1$, $p,\, q$ are relatively prime.
\end{proof}

We say that $a,\, b\in\cA$ are \emph{spectrally disjoint} if they
are both algebraic and the conditions of Lemma~\ref{pqf} hold.

\begin{theorem}\label{commutator}
If $a,\, b\in\cA$ are spectrally disjoint then $ax-xb=c$ has a
unique solution $x\in\cA$ for all $c\in\cA$.
\end{theorem}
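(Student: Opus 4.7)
The plan is to recast the equation $ax - xb = c$ as the assertion that a specific linear operator on $\cA$ is bijective, and to produce the inverse explicitly using polynomial calculus in two commuting operators. Concretely, define $T: \cA \to \cA$ by $T(x) = ax - xb$, so $T = L_a - R_b$. By the first (elementary) lemma of Section~2, $L_a$ and $R_b$ commute in $\cL(\cA)$, so the subalgebra they generate admits the usual polynomial manipulations.

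Next I would translate the spectral-disjointness hypothesis into facts about $L_a$ and $R_b$. Since $L$ is an algebra homomorphism and $R$ an anti-homomorphism, $p(L_a) = L_{p(a)} = 0$ and $q(R_b) = R_{q(b)} = 0$. Lemma~\ref{pqf} furnishes $h, k \in \cP$ with $ph + qk = 1$; evaluating at $a$ and using $p(a) = 0$ yields $q(a) k(a) = k(a) q(a) = e$, so $q(a)$ is invertible with inverse $k(a)$. Consequently $q(L_a) = L_{q(a)}$ is invertible in $\cL(\cA)$ with inverse $L_{k(a)}$.

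The decisive step is the two-variable polynomial identity
\[
q(u) - q(v) = (u - v)\, Q(u,v),
\]
valid for any commuting pair $u, v$ in a ring (expand each $u^m - v^m$ as $(u-v)(u^{m-1} + u^{m-2} v + \cdots + v^{m-1})$ and combine termwise). Substituting the commuting pair $u = L_a$, $v = R_b$ produces
\[
L_{q(a)} \;=\; q(L_a) - q(R_b) \;=\; (L_a - R_b)\, Q(L_a, R_b) \;=\; T \cdot Q(L_a, R_b).
\]
Since every factor on the right commutes and $L_{q(a)}$ is invertible, $T$ is invertible with two-sided inverse $Q(L_a, R_b) L_{k(a)}$. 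This gives both existence and uniqueness of $x$ for each $c \in \cA$.

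The main obstacle is legitimizing the polynomial identity $q(u) - q(v) = (u-v) Q(u,v)$ inside the non-commutative algebra $\cL(\cA)$: this factorization depends essentially on the commutativity of the two substituted operators, and that is exactly what the first lemma of Section~2 supplies. Without $L_a R_b = R_b L_a$ the factorization has no meaning and the argument collapses; everything else is formal manipulation of polynomials and the B\'ezout relation from Lemma~\ref{pqf}.
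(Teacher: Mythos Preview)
Your proposal is correct and follows essentially the same route as the paper: pass to the commuting operators $L_a,\,R_b\in\cL(\cA)$, use the factorization $f(u)-f(v)=(u-v)g(u,v)$ valid for commuting $u,v$, and invoke the B\'ezout relation from Lemma~\ref{pqf}. The only cosmetic difference is that the paper applies this factorization to the polynomial $f$ of Lemma~\ref{pqf} itself (so that $f(L_a)-f(R_b)=I-0=I$ immediately), whereas you apply it to the minimal polynomial $q$ of $b$ and then argue separately that $q(L_a)=L_{q(a)}$ is invertible; the substance is identical.
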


\begin{proof}
Let $\cB$ be the algebra of all linear maps from $\cA$ to $\cA$.
Define $A,\, B\in\cB$ by $A(x)=ax$ and $B(x)=xb$. If $f\in\cP$ is
defined as in Lemma~\ref{pqf} then $f(A)(x)=f(a)x=x$ and
$f(B)(x)=xf(b)=0$ for all $x\in \cA$. Therefore $f(A)=I$ and
$f(B)=0$ as elements of $\cB$. Since $A,\, B$ commute we have
\begin{equation}
I=f(A)-f(B)=(A-B)g(A,B)=g(A,B)(A-B)\label{A-Binverse}
\end{equation}
where $g$ is a polynomial in two commuting variables; this may be
proved by using the particular case
\[
A^n-B^n=(A-B)(A^{n-1}+A^{n-2}B\ldots + AB^{n-2}+B^{n-1}).
\]
Equation~(\ref{A-Binverse}) implies that $A-B$ is invertible as an
element of $\cB$, or equivalently one-one and onto as a linear
transformation on $\cA$.
\end{proof}

\begin{corollary}
Let $\cJ$ be an ideal in $\cA$. Under the assumptions of
Theorem~\ref{commutator} one has $x\in\cJ$ if and only if $c\in\cJ$.
\end{corollary}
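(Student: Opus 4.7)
The plan is to show each direction separately, and then observe that the non-trivial direction falls out almost for free from the explicit formula for $x$ implicit in the proof of Theorem~\ref{commutator}.

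The direction $x\in\cJ\implies c\in\cJ$ is immediate: since $\cJ$ is a two-sided ideal, $ax\in\cJ$ and $xb\in\cJ$, so $c=ax-xb\in\cJ$.

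For the converse, I would extract the solution formula from the proof of Theorem~\ref{commutator}. There we found $I=(A-B)g(A,B)=g(A,B)(A-B)$ in $\cB$, where $A$ and $B$ are the left- and right-multiplication operators by $a$ and $b$, and $g$ is a polynomial in two commuting indeterminates. Applying $g(A,B)$ to both sides of $(A-B)(x)=c$ gives
\[
x=g(A,B)(c)=\sum_{i,j}\gamma_{ij}\,A^iB^j(c)=\sum_{i,j}\gamma_{ij}\,a^i c\, b^j
\]
for certain coefficients $\gamma_{ij}\in\F$. If $c\in\cJ$, then since $\cJ$ is a two-sided ideal each term $a^i c\, b^j$ lies in $\cJ$, and so does their finite linear combination $x$.

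There is no real obstacle here; the only subtlety is remembering that the polynomial identity from the proof of Theorem~\ref{commutator} gives $g(A,B)$ as a genuine polynomial in the commuting operators $A$ and $B$, so that its action on $c$ is a finite $\F$-linear combination of elements of the form $a^i c\,b^j$ — and it is this two-sided sandwich form that makes two-sided ideal-closure (rather than merely left- or right-ideal closure) the correct and sufficient hypothesis.
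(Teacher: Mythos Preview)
Your proof is correct and matches the paper's constructive argument: the forward direction is immediate from the ideal axioms, and the converse follows from the explicit formula $x=g(A,B)c$ together with the observation that $A$ and $B$ (equivalently, left and right multiplication by $a$ and $b$) preserve $\cJ$. The paper also remarks in passing that one can argue more quickly by applying Theorem~\ref{commutator} in the quotient algebra $\cA/\cJ$, but your version is precisely the detailed route it then spells out.
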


\begin{proof}
The simplest proof involves applying Theorem~\ref{commutator} to the
relevant elements of $\cA/\cJ$. The following is a more constructive
approach. The statement $(x\in\cJ)\implies(c\in\cJ)$ follows
directly from the defining properties of an ideal. Conversely
(\ref{A-Binverse}) implies
\[
x=(A-B)^{-1}c=g(A,B)c.
\]
Since $A$ and $B$ map $\cJ$ into $\cJ$, one immediately concludes
that $c\in\cJ$ implies $x\in\cJ$.
\end{proof}

Theorem~\ref{commutator} can be extended to bounded operators on a
complex Banach space or even to elements of a Banach algebra; see
\cite{rosenblum}. The following is a more general version of
Theorem~\ref{commutator}.

\begin{theorem}\label{commuter}
If $a,\, b\in\cA$ commute and are spectrally disjoint then $a-b$ is
invertible.
\end{theorem}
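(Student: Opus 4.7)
The plan is to mimic the core of the proof of Theorem~\ref{commutator}, but applied directly in $\cA$ rather than in the algebra $\cB$ of linear maps on $\cA$. The crucial simplification is that here $a$ and $b$ themselves commute inside $\cA$, so we do not need the auxiliary operators $A,B$ at all.

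First I would invoke Lemma~\ref{pqf}: since $a$ and $b$ are spectrally disjoint, there exists a polynomial $f\in\cP$ such that $f(a)=e$ and $f(b)=0$. Subtracting gives $f(a)-f(b)=e$. Next I would exploit commutativity to factor this. Writing $f(s)=\sum_{k=0}^n \alp_k s^k$, each difference $a^k-b^k$ factors as
\[
a^k-b^k=(a-b)(a^{k-1}+a^{k-2}b+\ldots+ab^{k-2}+b^{k-1})
\]
precisely because $a$ and $b$ commute; the same expression also equals $(a^{k-1}+\ldots+b^{k-1})(a-b)$. Summing $\alp_k$ times these identities produces a two-variable polynomial $g$ in commuting variables with
\[
(a-b)g(a,b)=g(a,b)(a-b)=f(a)-f(b)=e.
\]
This exhibits $g(a,b)$ as a two-sided inverse of $a-b$ in $\cA$, proving the theorem.

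I do not anticipate any real obstacle: the argument is algebraic and elementary once Lemma~\ref{pqf} is in hand. The only point that needs a little care is making sure that the polynomial $g$ genuinely lies in the \emph{commutative} subalgebra generated by $a$ and $b$, so that the left and right multiplications by $(a-b)$ really do agree; this is immediate from $ab=ba$ but is worth remarking on, since without commutativity the factorization $a^k-b^k=(a-b)(\cdots)$ would fail.
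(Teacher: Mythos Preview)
Your proposal is correct and follows essentially the same route as the paper: invoke Lemma~\ref{pqf} to get $f$ with $f(a)=e$, $f(b)=0$, then factor $f(a)-f(b)=(a-b)g(a,b)$ using commutativity of $a$ and $b$ to exhibit $g(a,b)$ as the inverse. The only difference is cosmetic---you spell out the factorization $a^k-b^k=(a-b)(a^{k-1}+\cdots+b^{k-1})$ explicitly and note the two-sidedness, whereas the paper simply asserts the existence of the two-variable polynomial $g$.
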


\begin{proof}
Let $f\in\cP$ satisfy $f(a)=e$ and $f(b)=0$. There exists a
polynomial $g$ in two commuting variables $x,\, y$ such that
\[
f(x)-f(y)=(x-y)g(x,y) \mbox{ for all } x,\, y\in \F.
\]
This implies that
\[
e=f(a)-f(b)=(b-a)g(a,b)
\]
so $a-b$ is invertible with inverse $g(a,b)$.
\end{proof}

\begin{example}
Let $V$ denote the vector space of all $m\times n$ matrices with
entries in the field $\F$. Given $c\in V$, $a\in M(m,\F)$ and $b\in
M(n,\F)$, one might wish to solve the equation $ax-xb=c$ for $x\in
V$. This problem may be analyzed by using Theorem~\ref{commuter},
where $\cA$ is taken to be the algebra of all linear maps from $V$
to $V$. If $L,\, R \in \cA$ are defined by $L(v)=av$ and $R(v)=vb$
then one sees immediately that $L$ and $R$ commute. Moreover $L$ has
the same spectrum and minimum polynomial as $a$, while $R$ has the
same spectrum and minimum polynomial as $b$. Therefore the equation
$ax-xb=c$ is uniquely soluble if $a$ and $b$ are spectrally
disjoint.
\end{example}

In order to apply Theorem~\ref{commutator} or \ref{commuter} in a
purely algebraic context we need two lemmas. We define $\cP^\ast$ to
be the ring of polynomials with entries in the algebraic closure
$\F^\ast$ of $\F$. We put $\cA^\ast=\cA\otimes\F^\ast$ and identify
$a\in\cA$ with $a\otimes 1 \in\cA^\ast$ where $1$ is the
multiplicative identity in $\F^\ast$. We define $\cB$ to be the
algebra of all $\F$-linear operators from $\cA$ to $\cA$ and
$\cB^\ast$ to be the algebra of all $\F^\ast$-linear operators from
$\cA^\ast$ to $\cA^\ast$.

\begin{lemma}\label{oneminpoly}
Let $\cA$ be an algebra over $\F$ and let $a\not= 0$ be an algebraic
element of $\cA$. The following four polynomials coincide:
\begin{enumerate}
\item the minimum polynomial $p_1$ of $a\in\cA$;
\item the minimum polynomial $p_2$ of $L_a\in\cB$;
\item the minimum polynomial $p_3$ of $a\in\cA^\ast$;
\item the minimum polynomial $p_4$ of $L_a\in\cB^\ast$.
\end{enumerate}
\end{lemma}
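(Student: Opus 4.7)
The plan is to prove the four minimum polynomials coincide by establishing two pairs of equalities: $p_1 = p_2$ and $p_3 = p_4$ are immediate consequences of the injectivity of the left-regular representation $L$, while $p_1 = p_3$ amounts to saying that the minimum polynomial is unchanged under extension of scalars from $\F$ to its algebraic closure $\F^*$.

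For $p_1 = p_2$, I would use that $L: \cA \to \cB$ is an injective algebra homomorphism, as recorded just before Lemma~1. Consequently $p(L_a) = L_{p(a)}$ for every $p \in \cP$, and $L_{p(a)} = 0$ if and only if $p(a) = 0$ by injectivity. The ideals of polynomials annihilating $a$ and $L_a$ therefore coincide, so their monic generators $p_1$ and $p_2$ are equal. Exactly the same argument, applied to the injective $\F^*$-algebra homomorphism $L: \cA^* \to \cB^*$, yields $p_3 = p_4$.

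For $p_1 = p_3$, the inclusion $\cA \hookrightarrow \cA^*$, $a \mapsto a \otimes 1$, is an algebra homomorphism, so $p_1(a) = 0$ holds in $\cA^*$ as well. Hence $p_3 \mid p_1$ in $\F^*[x]$ and in particular $\deg p_3 \le \deg p_1$. For the reverse inequality I would choose a basis $\{e_i\}_{i \in I}$ of $\F^*$ as an $\F$-vector space with $e_{i_0} = 1$ for some distinguished index $i_0$. Then $\cA^* = \bigoplus_{i \in I} \cA \otimes e_i$ as an $\F$-vector space. Writing $p_3(x) = x^n + \sum_{r<n} \beta_r x^r$ with $\beta_r = \sum_i c_{r,i} e_i$, $c_{r,i} \in \F$, the identity $p_3(a) = 0$ in $\cA^*$ splits, component by component, into $\F$-polynomial identities in $\cA$. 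The $e_{i_0}$-component yields a monic polynomial in $\F[x]$ of degree $n = \deg p_3$ that annihilates $a$ in $\cA$, forcing $\deg p_1 \le \deg p_3$. Combined with $p_3 \mid p_1$ and monicity, this gives $p_3 = p_1$.

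The routine steps are the homomorphism observations in the first pair; the main obstacle is the bookkeeping in the third step, namely ensuring that a basis of $\F^*$ over $\F$ can be chosen to contain $1$ and that the direct-sum decomposition of $\cA^*$ legitimately separates the identity $p_3(a)=0$ into components indexed by $i \in I$. Once this is in place, isolating the $e_{i_0}$-component produces the required $\F$-polynomial of the correct degree.
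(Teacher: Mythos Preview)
Your proposal is correct and follows essentially the same route as the paper. The paper handles $p_1=p_2$ and $p_3=p_4$ via the injectivity of $a\mapsto L_a$, and for $p_1=p_3$ it applies an $\F$-linear projection $P:\F^\ast\to\F$ to the coefficients of $p_3$ to produce a monic $\F$-polynomial of the same degree annihilating $a$; your choice of a basis of $\F^\ast$ over $\F$ containing $1$ and extraction of the $e_{i_0}$-component is precisely an explicit realization of such a projection.
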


\begin{proof}
The equivalence of items 1 and 2  uses the fact that the
homomorphism $a\to L_a$ is one-one. Similarly for items 3 and 4. We
prove the equivalence of items 1 and 3 by using the fact that $\cP$
and $\cP^\ast$ are principal ideal domains.

Since $p_1(a)=0$ in $\cA$, this also holds in $\cA^\ast$, so $p_3$
is a factor of $p_1$. Since both are monic, in order to establish
that they are equal we need only prove that $\deg(p_3)\geq
\deg(p_1)$. Let $P$ be a projection on $\F^\ast$ with range $\F$,
where $\F^\ast$ is regarded as a vector space over $\F$. The
polynomial $p_5=P(p_3)\in\cP$ is monic with $p_5(a)=0$. Therefore
$p_1$ is a factor of $p_5$ and $\deg(p_3)=\deg(p_5)\geq \deg(p_1)$.
\end{proof}

\begin{lemma}
Let $\cA$ be an algebra over a field $\F$ and let $a,\, b$ be two
algebraic elements of $\cA$. If $p,\, q$ are their minimum
polynomials then the following are equivalent.
\begin{enumerate}
\item
$p,\, q$ are relatively prime in $\cP$.
\item
$p$ and $q$ are relatively prime in $\cP^\ast$.
\item
$L_a$ and $L_b$ have disjoint spectra regarded as operators on
$\cA^\ast$.
\end{enumerate}
\end{lemma}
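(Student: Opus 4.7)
The plan is to verify the chain $1 \Leftrightarrow 2 \Leftrightarrow 3$. The first equivalence is a general fact about polynomial coprimality under field extension, while the second is where Lemma~\ref{oneminpoly} and Theorem~\ref{alginvert} do the real work.

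First I would prove $1 \Leftrightarrow 2$. For $1 \implies 2$, B\'ezout in $\cP$ yields $h,k \in \cP$ with $ph + qk = 1$; the identical expression persists in the larger ring $\cP^\ast \supseteq \cP$, so $p,q$ are coprime in $\cP^\ast$. For $2 \implies 1$ I would argue contrapositively: if $p,q$ are not coprime in $\cP$, their monic $\gcd$ $d \in \cP$ has positive degree and still divides both $p$ and $q$ in $\cP^\ast$, hence they fail to be coprime in $\cP^\ast$ either.

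For $2 \Leftrightarrow 3$, the key identifications come from the preceding lemma. By items 3 and 4 of Lemma~\ref{oneminpoly}, the minimum polynomials of $L_a, L_b \in \cB^\ast$ coincide with $p, q$ (viewed as elements of $\cP^\ast$). Since $L_a, L_b$ are algebraic elements of $\cB^\ast$, Theorem~\ref{alginvert} gives
\[
\spec(L_a) = \{\lam \in \F^\ast : p(\lam) = 0\}, \qquad \spec(L_b) = \{\lam \in \F^\ast : q(\lam) = 0\}.
\]
Because $\F^\ast$ is algebraically closed, $p$ and $q$ factor completely over $\F^\ast$, so coprimality in $\cP^\ast$ is equivalent to these two zero sets being disjoint, which is precisely the disjointness of $\spec(L_a)$ and $\spec(L_b)$.

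The only step that requires attention is lining up the hypotheses of Lemma~\ref{oneminpoly} and Theorem~\ref{alginvert} for the operators $L_a, L_b$ acting on $\cA^\ast$ rather than on $\cA$. This is not truly an obstacle: Lemma~\ref{oneminpoly} was designed exactly to assert that the minimum polynomial of $L_a \in \cB^\ast$ equals $p$, and once that is in hand Theorem~\ref{alginvert} applies to $L_a$ just as to any algebraic element of any algebra with identity. No further calculation is needed beyond invoking the already-proved results.
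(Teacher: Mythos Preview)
Your argument is correct and follows essentially the same route as the paper: B\'ezout for $1\implies 2$, and the identification of $\spec(L_a),\spec(L_b)$ with the zero sets of $p,q$ (via Lemma~\ref{oneminpoly} and Theorem~\ref{alginvert}) for $2\Leftrightarrow 3$. The only difference is in $2\implies 1$: you argue contrapositively through the $\gcd$ in $\cP$, whereas the paper applies the $\F$-linear projection $P:\F^\ast\to\F$ (from the proof of Lemma~\ref{oneminpoly}) coefficientwise to a B\'ezout identity $h^\ast p+k^\ast q=1$ in $\cP^\ast$ to pull it back to $\cP$; both are short and valid.
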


\begin{proof}
$1\implies 2.$ The existence of an identity $h p+k q=1$ in $\cP$
implies that the same identity holds in $\cP^\ast$.

$2\implies 1.$ Since $p,\, q$ are relatively prime in $\cP^\ast$,
there exist $h^\ast,\, k^\ast\in\cP^\ast$ such that $h^\ast p+k^\ast
q=1$. Defining the projection $P$ as in the proof of
Lemma~\ref{oneminpoly} and putting $h=P(h^\ast),\, k=P(k^\ast)$ we
obtain $hp+kq=1$; hence $p,\, q$ are relatively prime in $\cP$.

$2\Leftrightarrow 3.$ It follows from Theorem~\ref{alginvert} that
both statements are equivalent to $p$ and $q$ having no common
zeros.
\end{proof}

\begin{example}
Let $\cQ$ denote the algebra of quaternions over a base field $\F$.
If
\[
a=a_0+a_1i+a_2j+a_3k
\]
then the minimum polynomial of $a$ is
\[
p(z)=z^2-2a_0z+(a_0^2+a_1^2+a_2^2+a_3^2).
\]
The field $\F$ is said to be formally real if whenever $a_r\in\F$
for $1\leq r\leq n$ and $\sum_{r=1}^n a_r^2=0$ one has $a_r=0$ for
all $\leq r \leq n$. Examples are $\R$ and the field of all rational
functions in one variable whose coefficients are all real.

If $\F$ is formally real then $\cQ$ is a division algebra (skew
field). In this case $\spec(a)=\emptyset$ unless $a_1=a_2=a_3=0$, in
which case $\spec(a)=a_0$. Given $a,\, b\in \cQ$ the equation
$ax-xb=c$  has a unique solution $x\in\cQ$ for all $c\in\cA$ if and
only if either $a_0\not= b_0$ or $a_1^2+a_2^2+a_3^2\not =
b_1^2+b_2^2+b_3^2$.
\end{example}

If $a=b$ then Theorem~\ref{commutator} cannot be applied. The next
two results provide some insight into this case. A trace on an
algebra $\cA$ is by definition a non-zero linear functional
$\tr:\cA\to\F$ such that $\tr(ab)=\tr(ba)$ for all $a,\, b\in\cA$.
Important infinite-dimensional examples arise in the theory of
finite von Neumann algebras and free probability theory
\cite{tao,voic}, as well as Example~\ref{tensor}.

\begin{lemma}\label{constraints}
Let $\cA$ be an algebra with a trace $\tr$. If $a,\, x,\, c\in\cA$
and $ax-xa=c$ then $c$ satisfies the constraints $\tr(a^mc)=0$ for
all $m\geq 0$.
\end{lemma}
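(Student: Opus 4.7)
The plan is to exploit the cyclicity of the trace directly. I would first multiply the identity $ax-xa=c$ on the left by $a^m$, obtaining
\[
a^{m+1}x - a^m x a = a^m c.
\]
Applying the trace to both sides and using linearity reduces the problem to showing that $\tr(a^{m+1}x) = \tr(a^m x a)$.

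The key step is the cyclic property $\tr(AB) = \tr(BA)$: taking $A = a^m x$ and $B = a$ immediately gives $\tr(a^m x \cdot a) = \tr(a \cdot a^m x) = \tr(a^{m+1} x)$. Therefore the left-hand side of the displayed equation vanishes after tracing, so $\tr(a^m c) = 0$.

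I do not expect any genuine obstacle here; the argument is a one-line consequence of the definition of a trace, and the statement of the lemma is really just recording an infinite family of linear constraints (one for each $m \geq 0$) that any commutator $[a,x]$ must satisfy. The only mild point worth noting is that one could equally well multiply on the right by $a^m$ and obtain the same conclusion, confirming that no asymmetry is being introduced.
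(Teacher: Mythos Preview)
Your argument is correct and is in fact more streamlined than the paper's. The paper takes the trace of the Leibniz-type identity
\[
[a^{m+1},x]=a^m[a,x]+a^{m-1}[a,x]a+\ldots +a[a,x]a^{m-1}+[a,x]a^m,
\]
so that the left-hand side has trace zero while, after cycling each summand on the right, one obtains $(m+1)\tr(a^m c)=0$. Your approach bypasses this expansion entirely: multiplying $[a,x]=c$ once by $a^m$ and applying a single instance of cyclicity does the job. Beyond being shorter, your route has the pleasant side benefit that it avoids the factor $m+1$, and hence works uniformly over fields of arbitrary characteristic, whereas the paper's argument as written needs $m+1$ to be invertible in $\F$.
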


\begin{proof}
One takes the trace of both sides of the identity
\[
[a^{m+1},x]=a^m[a,x]+a^{m-1}[a,x]a+\ldots +a[a,x]a^{m-1}+[a,x]a^m.
\]
\end{proof}

The constraints on $c$ in Lemma~\ref{constraints} are not
necessarily independent for different $m$.

\begin{theorem}
Let $\cA=M(n,\F)$ and let $d$ be the dimension of the set of $C$ for
which $AX-XA=C$ is soluble. Then $0\leq d\leq n^2-n$. The case $d=0$
occurs if $A=I$ while the case $d=n^2-n$ occurs if $A$ is the
$n\times n$ elementary Jordan matrix $J_{n,\lam}$ defined by
\[
J_{n,\lam,r,s}=\begin{choices}
\lam&\txtif s=r,\\
1&\txtif  s=r+1,\\
0&\txtotherwise.
\end{choices}
\]
\end{theorem}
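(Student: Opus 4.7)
The plan is to reduce the problem to a statement about centralizer dimensions. Define the $\F$-linear map $T_A : \cA \to \cA$ by $T_A(X) = AX - XA$; the set of admissible $C$ is precisely $\Ran(T_A)$, so the rank-nullity theorem yields
\[
d = \dim \Ran(T_A) = n^2 - \dim \Ker(T_A) = n^2 - \dim Z(A),
\]
where $Z(A) = \{ X \in \cA : AX = XA \}$ is the centralizer of $A$. The theorem therefore reduces to the chain $n \le \dim Z(A) \le n^2$ together with the identification of the two extreme cases.

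The extremes are easy in isolation. For $A = I$, $Z(I) = \cA$ has dimension $n^2$, giving $d = 0$. For $A = J_{n,\lam}$, I would write $A = \lam I + N$ with $N$ the nilpotent shift; any $B$ commuting with $A$ must commute with $N$, and a direct row-by-row computation shows such $B$ is upper-triangular Toeplitz, hence a polynomial in $N$. Since $I, N, \ldots, N^{n-1}$ are linearly independent (their nonzero entries lie on distinct super-diagonals), $\dim Z(J_{n,\lam}) = n$ and so $d = n^2 - n$.

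The main obstacle is the general lower bound $\dim Z(A) \ge n$, which is what underwrites $d \le n^2 - n$. My plan is to exploit the fact that $\dim Z(A)$ is the $\F$-nullity of the explicit $n^2 \times n^2$ coefficient matrix of $T_A$, and that such nullities are invariant under extension of the base field. Passing to the algebraic closure $\F^\ast$, I would put $A$ in Jordan form $A = \bigoplus_i J_{n_i, \lam_i}$ with $\sum_i n_i = n$, and then decompose $Z(A)$ block-wise: the $(i,j)$-block must vanish whenever $\lam_i \ne \lam_j$, and otherwise carries exactly $\min(n_i, n_j)$ free parameters (by the same Toeplitz computation as above, applied to a pair of blocks). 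Keeping only the diagonal contributions $i = j$ already accounts for $\sum_i n_i = n$ parameters, which suffices. The trivial inequality $d \ge 0$ completes the statement.
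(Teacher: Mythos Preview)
Your argument is correct and follows essentially the same route as the paper: reduce via rank--nullity to bounding $\dim Z(A)$ from below by $n$, pass to the algebraic closure using invariance of nullity under field extension, put $A$ in Jordan form, and read off at least $\sum_i n_i = n$ independent commuting matrices from the upper-triangular Toeplitz centralizer of each diagonal Jordan block. The only cosmetic difference is that you mention the off-diagonal block contributions explicitly before discarding them, whereas the paper simply notes that the diagonal blocks already suffice.
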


\begin{proof}
We define $L:\cA\to\cA$ by $L(X)=AX-XA$. Since the rank of a linear
transformation does not alter if one increases the base field, we
may assume that $\F$ is algebraically closed. We have to prove that
\[
\dim(\ker(L))\geq n,
\]
with equality in the stated case. We choose a basis of $\F^n$ for
which $A$ is a block diagonal matrix whose blocks are elementary
Jordan matrices. More precisely we assume that the diagonal blocks
are $J_{n_s,\lam_s}$ where $1\leq s\leq S$, so that $\sum_{s=1}^S
n_s=n$. Since $\ker(L)$ contains the space of all diagonal block
matrices $X$ such that $XA=AX$, we need only prove that the space
$V_s$ of $n_s\times n_s$ matrices $X_s$ satisfying
\[
X_sJ_{n_s,\lam_s}=J_{n_s,\lam_s}X_s
\]
has $\dim(V_s)=n_s$. A direct calculation shows that $V_s$ is the
space of $n_s\times n_s$ upper triangular Toeplitz matrices, which
has the required dimension.
\end{proof}

\section{Fields of analytic functions}\label{analytic}
The ring $\cP$ of all polynomials with complex coefficients is an
integral domain and its quotient field is the space $\cQ$ of all
rational functions on $\C$. $\cQ$ is not algebraically closed. If
$a,\, b,\, c\in\cQ$ and $a\not=0$ then the quadratic equation
$ax^2+bx+c=0$ has a solution in $\cQ$ if and only if the
discriminant $\Delta=b^2-4ac$ has a square root in $\cQ$. This holds
if and only if every zero and pole of $f$ has even order.

The elements of $\cQ$ have a finite number of poles and so, strictly
speaking, are not functions. The proper description involves the
theory of germs of analytic functions. To define these we start with
expressions of the form
\begin{equation}
f(z)=z^m\sum_{r=0}^\infty a_r z^r\label{fgerm}
\end{equation}
where $m\in\Z$ and $a_0\not= 0$ (unless $f$ is identically zero); we
assume that the series converges in some neighbourhood $U$ of $0$.
We identify two `germs' $(f,U)$ and $(g,V)$ if $g=f$ on $U\cap V$,
and define the sum and product of two germs by pointwise operations,
excluding the origin. The set of such germs is a field $\F_1$ that
contains $\cQ$. The series in (\ref{fgerm}) has a radius of
convergence $R\in (0,\infty]$, so the corresponding germ has a
canonical representative in which one puts $U=\{ z:|z|<R\}$. The
ring $\E$ of entire functions on $\C$ is contained in $\F_1$ and the
factorization theorem of Weierstrass, \cite[Section~13.3]{NP},
implies that its quotient field, also contained in $\F_1$, is the
field of (germs of) meromorphic functions.

The roots of a polynomial with coefficients in $\cQ$ may be described algebraically or geometrically, and the two perspectives are significantly different. Elements of the algebraic closure of F may be associated with a single-valued branch of a multi-valued analytic function, but geometrically speaking the natural entity is the Riemann surface associated with the multi-valued function. The following description presents the elements of the algebraic closure $\cF_2$ of $\cQ$ as analytic functions on a particular domain in $\C$; this domain is far from unique and assigns the origin in $\cC$ a status that it does not deserve, but it has the merit of being simple and explicit.

We define $\G$ to consist of all pairs $(f,U)$ where $f$ is an
analytic function on a  region $U$ of the form
\begin{equation}
U=\C\backslash \bigcup_r S(w_r)\label{rays}
\end{equation}
where each $S(w_r)$ is a semi-infinite ray starting at $w_r$ and the number of rays is finite or countable; in the second case it is assumed that $|w_r|\to\infty$ as $r\to \infty$. If $w=0$ we define $S(w)= [0,\infty)$ and for all other $w$ we define $S(w)=w[1,\infty)$. Each such `star-shaped' region $U$ is dense in $\C$ and simply connected, and the intersection of any finite number of such $U$ is of the same form.

The sum of $(f,U)$ and $(g,V)$ is defined to be the pointwise sum of
$f$ and $g$ with domain $U\cap V$, with a similar definition for the
product. We again identify $(f,U)$ and $(g,V)$ as elements of $\cG$
if $f=g$ on $U\cap V$. This turns $\G$ into an integral domain,
which has the following important property.

\begin{theorem}
Let
\[
p(z,w)=\sum_{r=0}^n a_r(w)z^r
\]
where $a_r$ are entire or meromorphic functions of $w\in\C$ and
$a_n=1$. Then
\[
p(z,w)=\prod_{r=1}^n(z-b_r(w))
\]
where each $b_r\in\G$.
\end{theorem}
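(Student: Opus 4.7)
The plan is to construct each root $b_r$ as a single-valued analytic function on a star-shaped domain $U$ of the form \eqref{rays}, using the implicit function theorem locally and the monodromy theorem globally. Let $K$ denote the field of meromorphic functions on $\C$, so that $p\in K[z]$. As a preliminary reduction, if one can factor the square-free part $\tilde p = p/\gcd(p,\partial_z p)\in K[z]$ into linear factors in $\G[z]$, then raising each factor to its multiplicity in $p$ yields a factorization of $p$. We therefore assume from now on that the discriminant $\Delta(w)\in K$ of $p$ with respect to $z$ is not identically zero.

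First I identify a discrete ``bad set'' $B\subset\C$ consisting of the poles of $a_0,\ldots,a_{n-1}$ together with the zeros of $\Delta$. Each is the pole- or zero-set of a nonzero meromorphic function on $\C$, hence discrete; so $B$ is at most countable, with $\infty$ as its only possible accumulation point. Enumerating $B=\{w_1,w_2,\ldots\}$, set
\[
U=\C\setminus\bigcup_j S(w_j),
\]
which by the construction preceding the theorem is a simply connected star-shaped region of the form \eqref{rays} and is disjoint from $B$.

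Next, fix a base point $w_0\in U$. Since $\Delta(w_0)\neq 0$ and no coefficient has a pole at $w_0$, the polynomial $p(\,\cdot\,,w_0)\in\C[z]$ has $n$ distinct roots $\beta_1,\ldots,\beta_n$ with $\partial_z p(\beta_r,w_0)\neq 0$. The implicit function theorem produces local analytic functions $b_r$ near $w_0$ satisfying $b_r(w_0)=\beta_r$ and $p(b_r(w),w)=0$. Because $U$ avoids $B$, any path in $U$ admits analytic continuation of each $b_r$: no coefficient develops a pole and distinct roots never collide. Simple connectivity of $U$ and the monodromy theorem then yield unique single-valued analytic extensions $b_r:U\to\C$ with $p(b_r(w),w)=0$ throughout $U$.

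At each $w\in U$ the values $b_1(w),\ldots,b_n(w)$ are then $n$ pairwise distinct roots of the monic degree-$n$ polynomial $p(\,\cdot\,,w)$, so the identity $p(z,w)=\prod_{r=1}^n(z-b_r(w))$ holds on $U$. Each $(b_r,U)$ thus represents an element of $\G$, giving the required factorization in $\G[z]$. The main obstacle is the preliminary square-free reduction together with the verification that $B$ is discrete and that $U$ is simply connected of the required form; the reduction is handled by working in the principal ideal domain $K[z]$, while the latter two facts follow respectively from the discreteness of zero and pole sets of nonzero meromorphic functions on $\C$ and from the construction preceding the theorem.
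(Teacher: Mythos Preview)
Your argument is correct and follows the same strategy as the paper: locate a discrete exceptional set, delete the rays $S(w_r)$ through its points to obtain a simply connected region of the form \eqref{rays}, and realize the roots of $p(\,\cdot\,,w)$ as single-valued analytic functions there. The only difference is one of detail: the paper invokes \cite[Theorem~IV.14.2]{SZ} for the statement that the root functions have only algebraic singularities forming a discrete set, whereas you reproduce that content directly via the implicit function theorem and the monodromy theorem, after an explicit square-free reduction in $K[z]$ to guarantee that the discriminant is not identically zero.
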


\begin{proof}
According to \cite[Theorem~IV.14.2]{SZ} the roots of the polynomial
$z\to p(z,w)$ are branches of analytic functions of $w$ whose
singularities are all algebraic, and the singularities form a
discrete set in $\C$. If we denote these by $w_r$ then each root has
one or more single-valued branches on the set obtained from $\C$ by
removing the rays through all $w_r$.
\end{proof}

\begin{example}
The solutions of the equation
\[
z^3-(1-w^2)=0
\]
are the three functions
\[
f_r(w)=\alp^r (1-w^2)^{1/3}
\]
where $r=0,1,2$ and $\alp=\rme^{2\pi i/3}$. These are different
branches of the multi-valued cube root. One may define a
single-valued branch of $(1-w^2)^{1/3}$ on
\[
U=\C\backslash \left( [1,+\infty)\cup(-\infty,-1]\right).
\]
by analytically continuing the obvious power series from the unit
ball.
\end{example}

{\bf Acknowledgements} I would like to thank M. Breuning, W. J.
Harvey, J. R. Partington, A. Pushnitski and S. Richard for useful
comments.

Department of Mathematics\\
King's College London\\
Strand\\
London, WC2R 2LS\\
UK

E.Brian.Davies@kcl.ac.uk


\begin{thebibliography}{99}

\bibitem{AM} M. F. Atiyah, I. G. Macdonald, Introduction to
Commutative Algebra, Addison-Wesley, Reading, Mass., 1969.

\bibitem{bourbaki}
N. Bourbaki, \'El\'ements de math\'ematiques, fasc. 32, Th\'eories
spectrales, Chap.~1, 2, Hermann, Paris, 1967.

\bibitem{carter}
R. W. Carter, Simple groups of Lie Type, John Wiley and Sons,
London, 1989.

\bibitem{dales}
G. Dales, Banach Algebras and Automatic Continuity. London Math.
Soc. Monographs, Clarendon Press, Oxford, 2000.

\bibitem{LOTS}
E. B. Davies, Linear Operators and their Spectra, Cambridge Univ.
Press, 2007.

\bibitem{EBD2}
E. B. Davies, Decomposing the essential spectrum, J. Funct. Anal.
257 (2009) 506-536.

\bibitem{DT}
J.-P. Dedier and F Tisseur, Perturbation theory for homogeneous
polynomial eigenvalue problems, Linear Algebra Appl., 358 (2003)
71-94.

\bibitem{EE}
D. E. Edmunds and W. D. Evans, Spectral Theory and Differential
Operators, Oxford Science Publ., Oxford Univ. Press, 1987. Prop.
IX.2.5 and Theorem IX.2.4 as a ref for my Theorem 8.

\bibitem{FW}
H. Flanders and H. K. Wimmer, On the matrix equations $AX-XB=G$ and
$AX- YB=C$, SIAM J. Appl. Math., 32 (1977) 707-710.

\bibitem{GI1}
V. Georgescu, A. Iftimovici, $C^\ast$-algebras of quantum
Hamiltonians, in Operator Algebras and Mathematical Physics, eds.
J.-M. Combes, J. Cuntz, G. A. Elliot, G. Nenciu, H. Siedentop, S.
Stratila, Proc. Conf. Operator Algebras and Mathematical Physics,
Constanta 2001, Ed. Theta (2003), 123-167.

\bibitem{GI2}
V. Georgescu, A. Iftimovici, Localizations at infinity and essential
spectrum of quantum Hamiltonians: I. General theory, Rev. Math.
Phys. 18 (2006) 417-483.

\bibitem{GLR}
I. Gohberg, P. Lancaster and L. Rodman, Matrix Polynomials, Acad.
Press, New York, 1982.

\bibitem{LR}
P. Lancaster, L. Rodman, Algebraic Riccati equations, Oxford Univ.
Press, Oxford, 1995.

\bibitem{NP}
R. Nevanlinna and V. Paatero, Introduction to Complex Analysis,
Addison-Wesley, Reading, Mass., 1964.

\bibitem{peller} V. V. Peller, Hankel Operators and Their
Applications, Springer-Verlag, New York, 2003.

\bibitem{PS}
A. Pressley and G. Segal, Loop Groups, Oxford Math. Mono., Oxford
Univ. Press, 1986.

\bibitem{rosenblum}
M. Rosenblum, On the operator equation $BX-XA = Q$, Duke Math. J. 23
(1956) 263–269.

\bibitem{Roth}
W. E. Roth, The equation $AX - YB=C$ and $AX-XB = C$ in matrices,
Proc. Amer. Math. Soc. 3 (1952) 392-396. Amer. Math. Soc., 97 (1952)
392-396.

\bibitem{SZ}
S. Saks and A. Zygmund, Analytic Functions. Transl E. J. Scott,
Monografie Matematyczne, Tom 28, Warsaw, 1952.

\bibitem{S-NF} B. Sz.-Nagy and C Foia\c{s}, Harmonic Analysis of
Operators on Hilbert Space, North-Holland, Amsterdam, 1970.

\bibitem{tao}
T. Tao, 254A, Notes 5: Free probability. \\ \verb+http://terrytao.wordpress.com/+ \\
\verb+2010/02/10/245a-notes-5-free-probability/+

\bibitem{voic} D. V. Voiculescu, K. J. Dykema, A. Nica,
Free random variables. A noncommutative probability approach to free
products with applications to random matrices, operator algebras and
harmonic analysis on free groups. CRM Monograph Series, 1. American
Mathematical Society, Providence, RI, 1992.

\bibitem{whitney}
H. Whitney, Complex Analytic Varieties, Addison-Wesley, Reading,
Mass., 1972.

\bibitem{ZDG}
K. Zhou, J. C. Doyle and K. Glover, Robust and Optimal Control,
Prentice Hall, Upper Saddle River, New Jersey, 1996.



\end{thebibliography}
\end{document}